\newtheorem{teo}{Theorem}[section]
\newtheorem{lema}{Lemma}[section]
\newtheorem{obs}{Remark}[section]
\begin{document}

\title[Uniform Decay Rates for the Damped Korteweg-de Vries.]
{Exponential Decay Rates for the Damped Korteweg-de Vries Type
Equation.}

\author[M. M. Cavalcanti]{Marcelo M. Cavalcanti $\dagger$}
\author[V. N. Domingos Cavalcanti]{Valéria N. Domingos Cavalcanti $\dagger$}
\author[F. Natali]{F\'abio M. A. Natali $\dagger$}


\keywords{Korteweg-de Vries, exponential decay.}
\thanks{{\it Date}: October, 2008.}

\maketitle

{\scriptsize \centerline{$\dagger$ Department of Mathematics -
State University of Maringá}
 \centerline{Avenida Colombo, 5790, CEP 87020-900,
 Maring\'a, PR, Brazil.}}

\begin{abstract}

The exponential decay rate of $L^2-$norm related to the
Korteweg-de Vries equation with localized damping posed on the
whole real line is established. In addition, using classical
arguments we determine that the solutions associated to the fully
damped Korteweg-de Vries equation do not decay in $H^1-$ level for
arbitrary initial data.
\end{abstract}

\section{Introduction}
The present paper sheds new light on the study of exponential decay
rate of the energy associated with mild solutions of the nonlinear
damped Korteweg-de Vries type equation given by
\begin{equation}\label{kdv1}
\displaystyle\left\{\begin{array}{lll} u_t+uu_x+u_{xxx}+a(x)u=0,\ \
\ \ \ \ \
(x,t)\in\mathbb{R}\times[0,+\infty),\\\\
u(x,0)=u_0(x),\ \ \ \ \ \ \ \ \ \ \ \ \ \ \ \ \ \ \ \ \ \ \ \ \
x\in\mathbb{R},\end{array}\right.
\end{equation}
with the following assumptions:\\

$(H1)$ ~ $a\in W^{2,\infty}(\mathbb{R})$
is a nonnegative function in $\mathbb{R}$, satisfying
$a(x)\geq\alpha_0>0\ \mbox{for}\ x>R_1,\ R_1>0$ (or
$a(x)\geq\lambda_0>0\ \mbox{for}\ x< -R_2,\ R_2>0$).\\

The function $a(x)$ presented in equation $(\ref{kdv1})$ is
responsible for the localized effect of the dissipative mechanism.\\
\indent Equation $(\ref{kdv1})$ is a generalization of the well-known Korteweg-de Vries equation, that is
\begin{equation}\label{kdvorig}
u_t+uu_x+u_{xxx}=0,
\end{equation}
and was established by Boussinesq in $1877$, and later, in $1895$,
Korteweg and de Vries proved that this equation is an approximate
description of surface water waves propagating in a canal. An
important characteristic of this equation is its travelling-wave
solutions, that is, its special solutions of the form
$u(x,t)=\varphi(x-ct)$, where $\varphi$ denotes the \textit{wave
profile} and $c>0$ is the \textit{wave speed}. The study of the
nonlinear orbital stability or instability for equation
(\ref{kdvorig}) based on travelling waves has had a considerable
development and refinement in recent years (e.g \cite{albert1},
\cite{natali}, \cite{angulo}, \cite{benjamin1}, \cite{bona1}, and
\cite{W1} and references therein).\\

\indent In particular, when it is considered in equation
$(\ref{kdv1})$, $a(x)=\mu>0$, where $\mu$ is a sufficiently small
positive constant, the stability of the solitary wave can be
studied using asymptotic methods which were developed for
perturbed solitons (see \cite{grimshaw}). The variation of the
solitary wave amplitude (weak-amplitude) is given by the energy
balance equation:
\begin{equation}\label{E0}
\displaystyle\frac{dE_0(t)}{dt}=-2\mu\int_{\mathbb{R}}u^2dx,
\end{equation}
where $E_0(t)=||u(t)||_{L^2(\mathbb{R})}^2$ is the momentum
(energy), substituting the expression for a solitary wave
directly into $(\ref{E0})$.\\

\indent Equation (\ref{kdv1}) has attracted considerable
attention, particularly when the exponential decay rates are
studied for equations posed on a finite interval $(0,L)$, instead
of the whole line; for instance see \cite{Bisognin},
\cite{Felipe}, \cite{KDV-weak}, \cite{Pazoto}, \cite{Perla} and
references therein. According to our best knowledge, to deal with
dissipative effects in the {\em whole line} and assuming that the
function $a(\cdot)$ {\em does not possess} compact support, as in
the present paper, has never been treated so far in the
literature. This brings up new difficulties when establishing the
uniform stabilization of the
energy, namely, $E_0(t):=||u(t)||_{L^2(0,L)}^2$,~$t\geq 0$.\\

\indent To overcome these difficulties, we have to combine {\em
sharp estimates} due to Kenig, Ponce and Vega \cite{KPV91} in
order to establish the well-posedness of problem (\ref{kdv1}),
with unique continuation properties proved in Zhang \cite{zhang}
and new tools, in order to derive exponential and uniform decay
rates of the energy. It is important to observe that the majority
of the papers in the literature regarding this subject (decay
rates estimates for $E_0$), deals with regular solutions in order
to employ unique continuation properties, for instance see Kenig,
Ponce and Vega \cite{KPV91}, Zhang \cite{zhang} and references
therein. In the present case, using the continuous dependence and
arguments of density we are going to work with smooth solutions
which enable us to use arguments of unique continuation in order
to obtain the desired decay rate estimates for mild solutions.

\medskip

We observe that a mild solution $u$ of (\ref{kdv1}) belongs to
$C_B([0,T];H^1(\mathbb{R}))$, then it is natural to look for
exponential stability in $H^1-$level instead of $L^2-$level; that
is, it is natural to investigate if the energy of first order
$E_1(t)=\frac12||u(t)||_{H^1}^2-\frac13\int_{\mathbb{R}}u^3\,dx$,
or at least, the $H^1-$norm: $||u(t)||_{H^1}^2$, decays
exponentially. But, even if $E_1'(t)\leq 0$, for all $t \geq 0$,
the exponential decay does not hold for arbitrary mild solutions,
according to section 3.  The last statement  is a new approach in the context of dispersive equation and
it is proved using the well- known {\em potential well} theory,
see for instance references
\cite{Cavalcanti}, \cite{Cavalcanti2}, \cite{Cavalcanti3} or \cite{vitillaro}.\\

\medskip

In order to show the current findings, this paper is organized as
follows. Section 2 establishes the well-posedness  and exponential
decay of the energy related to equation $(\ref{kdv1})$. Section 3
is devoted to prove that the decay rate estimates in $H^1-level$
is not expected for arbitrary initial data.

\section{Well-Posedness and Exponential Decay for Equation $(\ref{kdv1})$.}
\subsection{Well-Posedness.}
\hspace{0.2cm}The usual spaces $L^2(\mathbb{R})$ and
$H^1(\mathbb{R})$ will be considered endowed with the followings
norms
$$||u||_{L^2}:=||u||_{L^2(\mathbb{R})}=\displaystyle\left(\int_{\mathbb{R}}
|u(x)|^2dx\right)^{1/2},$$
and
$$||u||_{H^1}:=||u||_{H^1(\mathbb{R})}=\displaystyle\left(||u||_{L^2}^2+
||u_x||_{L^2}^2\right)^{1/2}.$$

 For $1\leq p<\infty$ we define the
norm in $L^p(\mathbb{R})$
$$||u||_{L^p}:=||u||_{L^p(\mathbb{R})}=
\displaystyle\left(\int_{\mathbb{R}}|u(x)|^pdx\right)^{1/p}$$ and
for $p=\infty$ we define
$$||u||_{L^{\infty}}:=||u||_{L^{\infty}(\mathbb{R})}=
\displaystyle\mbox{ess}\sup_{x\in\mathbb{R}}|u(x)|.$$

 Next, we
shall consider similar sets introduced by Kenig, Ponce and Vega in
\cite{KPV91}.  For each $T>0$ and each measurable function
$u:\mathbb{R}\times[-T,T]\rightarrow\mathbb{R}$ we define

\begin{equation}
\gamma_1(u,T)=\displaystyle
\mbox{ess}\sup_{t\in[-T,T]}||u(t)||_{H^1(\mathbb{R})},\label{gamma1}
\end{equation}

\begin{equation}
\gamma_2(u,T)=\displaystyle\left(
\int_{-T}^{T}||u_{xx}(t)||_{L^{\infty}(\mathbb{R})}^2dt\right)^{1/2}\label{gamma2},
\end{equation}

\begin{equation}
\gamma_3(u,T)=\displaystyle
\left(\int_{-T}^{T}||u_x(t)||_{L^{\infty}(\mathbb{R})}^6dt\right)^{1/6},\label{gamma3}
\end{equation}

\begin{equation}
\gamma_4(u,T)=\displaystyle\frac{1}{1+T}\left(
\mbox{ess}\sup_{t\in[-T,T]}||u(t)||_{L^2(\mathbb{R})}^2dx\right)^{1/2}
\label{gamma4}
\end{equation}
and

\begin{equation}
\Gamma(u,T)=\max_{i=1,2,3,4}\gamma_i(u,T).\label{Gamma}
\end{equation}

Let us define
\begin{eqnarray}\label{X_T}
X_T=\{u\in C([-T,T], H^1(\mathbb{R})), \Gamma(u,T)< +\infty\},
\end{eqnarray}
and
\begin{equation}\label{XT}
X_{T}^k=\{u\in C([-T,T], H^1(\mathbb{R})),\
\Gamma(u,T)\leq\kappa<+\infty\},\end{equation} where $\kappa>0$
will be
chosen later.\\

It is straightforward to prove that $X_T$ defined in $(\ref{XT})$
is a Banach space when endowed with the norm
$||\cdot||_{X_{T}}:=\Gamma(\cdot,T)$.

\subsection{Linear Estimates.} In this subsection the
estimates related to the linear Korteweg-de Vries equation is
considered, that is, estimates related to problem
\begin{equation}\left\{\begin{array}{lll}
v_t+v_{xxx}=0,\ \ \ (x,t)\in\mathbb{R}\times\mathbb{R}\\\\
v(x,0)=v_0(x), \ \ x\in\mathbb{R}. \label{linearkdv}
\end{array}\right.
\end{equation}

The proof of the following result can be found in Kenig, Ponce and
Vega  \cite{KPV91}.

\begin{lema} Let us denote by $\{S(t)\}_{t\in\mathbb{R}}$ the group associated
with the linear equation $(\ref{linearkdv})$.  There is a constant
$c_1>1$ such that
\begin{equation}\label{linearest1}
\displaystyle
\left(\int_{-\infty}^{+\infty}||S(t)v_0||_{L^{\infty}}^6dt\right)^{1/6}\leq
c_1||v_0||_{L^2},
\end{equation}
for all $v_0\in L^2(\mathbb{R})$, and
\begin{equation}\label{linearest2}
\displaystyle\left(\int_{\mathbb{R}}\
\mbox{ess}\sup_{t\in[-T,T]}|S(t)v_0(x)|^2dx\right)^{1/2} \leq
c_1(1+T)||v_0||_{H^1},
\end{equation}
for all $v_0\in H^1(\mathbb{R})$ and $T>0$. Moreover, for all
$v_0\in L^2(\mathbb{R})$ we have,
\begin{equation}
\label{linearest3} \displaystyle\left( \mbox{ess}\
\sup_{x\in\mathbb{R}}
\int_{-T}^{T}|\partial_xS(t)v_0(x)|^2dt\right)^{1/2}\leq
c_1||v_0||_{L^2}.
\end{equation}
\label{lema1}
\end{lema}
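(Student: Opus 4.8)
The plan is to recall that these are the standard smoothing and Strichartz-type estimates for the Airy group $S(t) = e^{-t\partial_x^3}$, which are proved in Kenig--Ponce--Vega \cite{KPV91} by Fourier-analytic methods; I would reproduce the main lines of their argument. First I would write $S(t)v_0$ via the Fourier transform as $S(t)v_0(x) = \frac{1}{2\pi}\int_{\mathbb{R}} e^{ix\xi + it\xi^3}\widehat{v_0}(\xi)\,d\xi$, so that the phase is $\phi(\xi) = x\xi + t\xi^3$ with $\phi'(\xi) = x + 3t\xi^2$ and $\phi''(\xi) = 6t\xi$. The estimate \eqref{linearest3} is the sharp Kato-type local smoothing estimate: changing variables $\eta = \xi^3$ (so that $|\partial_x S(t)v_0|$ becomes, up to constants, a one-dimensional Fourier restriction/extension in the $t$ variable with $|d\xi/d\eta| \sim |\eta|^{-2/3}$), one sees that $\partial_x S(t)v_0(x)$ is, for fixed $x$, the inverse Fourier transform in $t$ of a function whose $L^2_t$ norm is controlled by $\|v_0\|_{L^2}$ uniformly in $x$; this is essentially Plancherel after the substitution, exploiting that the factor $i\xi$ from the derivative exactly cancels the Jacobian singularity.

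Next, for \eqref{linearest1} I would invoke the standard $TT^*$ argument: the estimate $\|S(t)v_0\|_{L^6_t L^\infty_x} \lesssim \|v_0\|_{L^2}$ is dual to a space-time estimate for $\int S(-t)F(t)\,dt$, and by $TT^*$ reduces to bounding the operator with kernel $S(t-s)$ acting as a map $L^{6/5}_t L^1_x \to L^6_t L^\infty_x$; this follows from the pointwise decay $\|S(t)\delta_0\|_{L^\infty} \lesssim |t|^{-1/3}$ for the Airy kernel (obtained from stationary phase / van der Corput on $\int e^{it\xi^3 + ix\xi}\,d\xi$, using $\phi''' $ bounded below) together with the Hardy--Littlewood--Sobolev inequality with the exponent $1/3$ matching $1 + 1/6 = 1/(6/5) + 1/3$. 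Finally, \eqref{linearest2} is the maximal-function estimate; it is proved by writing $\mathrm{ess\,sup}_{t\in[-T,T]}|S(t)v_0(x)|$, decomposing $v_0$ in frequency, and on each dyadic piece using the local smoothing bound together with a Sobolev embedding in $t$ to convert the $L^2_t$ control of $\partial_t(S(t)v_0) = -\partial_x^3 S(t)v_0$ into pointwise-in-$t$ control, which produces the $H^1$ norm on the right and the factor $(1+T)$ from integrating over the compact time interval.

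The main obstacle — or rather the technically delicate point — is the maximal function estimate \eqref{linearest2}: controlling $\sup_t$ pointwise in $x$ is genuinely harder than the fixed-time or space-time mixed-norm bounds, and the honest proof in \cite{KPV91} requires a careful Littlewood--Paley decomposition and an interpolation/square-function argument rather than a one-line stationary-phase estimate. Since, however, the excerpt explicitly states "The proof of the following result can be found in Kenig, Ponce and Vega \cite{KPV91}", the intended argument here is simply to cite that reference; I would therefore present the above Fourier-analytic reductions as a sketch and defer the full details of the maximal function bound to \cite{KPV91}, emphasizing only that all three estimates rest on the algebraic structure of the Airy phase $\xi^3$ (its nondegenerate third derivative and the cubic change of variables) and that $c_1 > 1$ can be taken uniform because the homogeneity of the phase makes the estimates scale-invariant.
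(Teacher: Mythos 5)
Your proposal matches the paper exactly: the paper offers no proof of Lemma \ref{lema1} beyond the citation to Kenig--Ponce--Vega \cite{KPV91}, and your Fourier-analytic sketches (Plancherel after the cubic change of variables for \eqref{linearest3}, the $TT^*$ argument with the $|t|^{-1/3}$ Airy dispersive bound and Hardy--Littlewood--Sobolev for \eqref{linearest1}) are precisely the standard arguments from that reference. The one inaccuracy is in your sketch of the maximal estimate \eqref{linearest2}: converting $L^2_t$ control into $\sup_t$ control via $\partial_t S(t)v_0=-\partial_x^3S(t)v_0$ would cost three derivatives rather than the one available in $H^1$, so the honest route is the one you already defer to, namely the Littlewood--Paley/local maximal-function argument of \cite{KPV91} summed over unit time intervals, which is where the factor $(1+T)$ actually comes from.
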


Since $S(t):L^2(\mathbb{R})\rightarrow L^2(\mathbb{R})$ and
$S(t):H^1(\mathbb{R})\rightarrow H^1(\mathbb{R})$ are isometries
for all $t\in\mathbb{R}$, if the initial condition $v_0$ belongs
to $H^1(\mathbb{R})$, the solution associated to
$(\ref{linearkdv})$ belongs to $C_{B}([-T,T];H^1(\mathbb{R}))$,
that is,
\begin{equation}\label{contlinearkdv}
v_0\in H^1(\mathbb{R})\mapsto v\in C_{B}([-T,T];H^1(\mathbb{R})),
\end{equation}
for all $T>0$, where $v(t)=S(t)v_0$.

From Lemma $\ref{lema1}$ and $(\ref{contlinearkdv})$ we are able to
present the following result,

\begin{lema}\label{lema2}
Let $T>0$ and $v_0\in H^1(\mathbb{R})$. Defining
$v(\cdot,t)=S(t)v_0$, we obtain that $v\in X_T$ and,
\begin{equation}\label{estXTlinear}
||v||_{X_{T}}\leq c_1||v_0||_{H^1},
\end{equation}
where $c_1>1$ is the constant given in Lemma $\ref{lema1}$ and
does not depend on $T>0$ and $v_0$.
\end{lema}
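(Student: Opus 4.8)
The plan is to estimate each of the four seminorms $\gamma_i(v,T)$, $i=1,2,3,4$, separately by $c_1\|v_0\|_{H^1}$. Since $\|v\|_{X_T}=\Gamma(v,T)=\max_{i}\gamma_i(v,T)$ and since $v=S(\cdot)v_0$ already belongs to $C([-T,T];H^1(\mathbb{R}))$ by $(\ref{contlinearkdv})$, this will simultaneously give $v\in X_T$ and the bound $(\ref{estXTlinear})$. The three facts I would use throughout are: $S(t)$ is an isometry on $L^2(\mathbb{R})$ and on $H^1(\mathbb{R})$; $S(t)$ commutes with $\partial_x$, so that $v_x(t)=S(t)(\partial_xv_0)$ and $v_{xx}(t)=\partial_xS(t)(\partial_xv_0)$ with $\partial_xv_0\in L^2(\mathbb{R})$ because $v_0\in H^1(\mathbb{R})$; and the three linear estimates of Lemma $\ref{lema1}$. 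As $c_1$ is the constant of Lemma $\ref{lema1}$, it does not depend on $T$ or on $v_0$, and part of the bookkeeping is to check that one and the same $c_1$ serves all four terms.

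For $\gamma_1$ only the isometry is needed: $\|v(t)\|_{H^1}=\|v_0\|_{H^1}$ for every $t$, hence $\gamma_1(v,T)=\|v_0\|_{H^1}\le c_1\|v_0\|_{H^1}$ since $c_1>1$. For $\gamma_3$ I would write $v_x(t)=S(t)(\partial_xv_0)$ and apply $(\ref{linearest1})$ with $\partial_xv_0$ in the role of $v_0$; enlarging the time interval from $[-T,T]$ to $\mathbb{R}$ only increases the integral, so $\gamma_3(v,T)\le\bigl(\int_{-\infty}^{+\infty}\|S(t)(\partial_xv_0)\|_{L^\infty}^6\,dt\bigr)^{1/6}\le c_1\|\partial_xv_0\|_{L^2}\le c_1\|v_0\|_{H^1}$, uniformly in $T$. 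For $\gamma_4$ I would apply $(\ref{linearest2})$ to $v_0$ and then divide by $1+T$: the factor $(1+T)$ coming from $(\ref{linearest2})$ is exactly absorbed, leaving $\gamma_4(v,T)\le c_1\|v_0\|_{H^1}$.

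The term $\gamma_2$ is where the genuine content sits, and I expect it to be the only place asking for some care. Using $v_{xx}(t)=\partial_xS(t)(\partial_xv_0)$ and $\partial_xv_0\in L^2(\mathbb{R})$, estimate $(\ref{linearest3})$ applied with $\partial_xv_0$ in place of $v_0$ controls the space--time norm of $v_{xx}$ by $c_1\|\partial_xv_0\|_{L^2}\le c_1\|v_0\|_{H^1}$, again with a constant independent of $T$; the essential feature is the gain of one spatial derivative furnished by the local (Kato) smoothing estimate, which is precisely why $v_0\in H^1$, rather than merely $v_0\in L^2$, is required. The point demanding attention is that the mixed norm in the definition of $\gamma_2$ is read consistently with the one $(\ref{linearest3})$ actually provides (the spatial supremum taken relative to the $L^2$-in-time integral); granting this, the estimate is immediate. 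Assembling the four bounds yields $\Gamma(v,T)=\max_{i=1,2,3,4}\gamma_i(v,T)\le c_1\|v_0\|_{H^1}<+\infty$, which proves both $v\in X_T$ and $(\ref{estXTlinear})$ with $c_1$ independent of $T$ and $v_0$, as asserted.
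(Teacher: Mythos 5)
Your proof is correct and follows essentially the same route as the paper: the isometry of $S(t)$ on $H^1(\mathbb{R})$ for $\gamma_1$, estimate $(\ref{linearest3})$ applied to $\partial_x v_0$ for $\gamma_2$, and $(\ref{linearest1})$, $(\ref{linearest2})$ for $\gamma_3$, $\gamma_4$. Your observation that $\gamma_2$ must be read as the $L^{\infty}_x L^2_t$ norm furnished by $(\ref{linearest3})$ is exactly how the paper's own computation in $(\ref{est1})$ interprets the definition $(\ref{gamma2})$.
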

\begin{proof}
Indeed, since $w\in H^1(\mathbb{R})\mapsto S(t)w\in
C_{B}([-T,T];H^1(\mathbb{R}))$ is a linear isometry, we get from
$(\ref{contlinearkdv})$ that $\gamma_1(v,T)\leq c_1||v_0||_{H^1}$.\\

\indent From  $(\ref{linearest3})$ we deduce
\begin{equation}\label{est1}\begin{array}{lll}
\gamma_2(v,T)&=&\displaystyle\left(
\mbox{ess}\sup_{x\in\mathbb{R}}\int_{-T}^{T}
|\partial_x\left(S(t)\partial_xv_0(x,t)\right)|^2dt\right)^{1/2}\\\\
&\leq&\displaystyle c_1 ||\partial_xv_0||_{L^2} \leq
c_1||v_0||_{H^1}.
\end{array}
\end{equation}

Using $(\ref{linearest1})$ and $(\ref{linearest2})$ we conclude
$$\gamma_3(v,T)\leq c_1||v_0||_{H^1}\ \ \ \ \mbox{and}\ \ \
\gamma_4(v,T)\leq c_1||v_0||_{H^1}.$$

This arguments establish the lemma.
\end{proof}

The next step is to analyze the non-homogenous equation associated
with $(\ref{linearkdv})$, given by
\begin{equation}\left\{\begin{array}{lll}
\varphi_t+\varphi_{xxx}=g,\ \ \ (x,t)\in\mathbb{R}\times\mathbb{R}\\\\
\varphi(x,0)=\varphi_0(x), \ \ x\in\mathbb{R}. \label{nonhomkdv}
\end{array}\right.
\end{equation}

In Kenig, Ponce and Vega \cite{KPV91} they proved the following
result:

\begin{lema}\label{lema3}
Let $T>0$ and $g\in L^1([-T,T];H^1(\mathbb{R}))$. If we define
\begin{equation}\label{eqintg}
\varphi(\cdot,t)=\int_{0}^{t}S(t-\tau)g(\cdot,\tau)d\tau,
\end{equation}
for $t\in[0,T]$, then $\varphi$ belongs to $X_T$ and
\begin{equation}\label{estXTnonhom}
||\varphi||_{X_T}\leq c_1||g||_{L^1([-T,T];H^1(\mathbb{R}))},
\end{equation}
where $c_1>1$ is the constant given in Lemma $\ref{lema1}$.\\
\end{lema}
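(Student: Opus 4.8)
The plan is to derive the estimate \eqref{estXTnonhom} by controlling each of the four seminorms $\gamma_i(\varphi,T)$ appearing in $\Gamma(\varphi,T)$, using the linear estimates of Lemma \ref{lema1} together with Minkowski's integral inequality, exactly in the spirit of the proof of Lemma \ref{lema2} but now with the extra integration in $\tau$ coming from Duhamel's formula \eqref{eqintg}. The point is that each estimate in Lemma \ref{lema1} is of the form $\|\text{(space-time norm of }S(t)w)\| \le c_1\|w\|$, and since $\varphi(t)=\int_0^t S(t-\tau)g(\tau)\,d\tau$ is a superposition of such terms, Minkowski's inequality lets us pull the norm inside the $d\tau$ integral, apply Lemma \ref{lema1} to $g(\tau)$ for each fixed $\tau$, and then integrate, picking up the factor $\int_{-T}^T\|g(\tau)\|_{H^1}\,d\tau=\|g\|_{L^1([-T,T];H^1(\mathbb{R}))}$.

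Concretely: for $\gamma_1$, since $S(t-\tau)$ is an isometry on $H^1(\mathbb{R})$, one has $\|\varphi(t)\|_{H^1}\le \int_0^{|t|}\|g(\tau)\|_{H^1}\,d\tau\le \|g\|_{L^1([-T,T];H^1)}$, so $\gamma_1(\varphi,T)\le \|g\|_{L^1([-T,T];H^1)}$. For $\gamma_2$, using that $\partial_x^2\varphi(t)=\int_0^t \partial_x S(t-\tau)\partial_x g(\tau)\,d\tau$ and applying \eqref{linearest3} (with the roles of the $x$-sup and $t$-integral), Minkowski in the $t$-variable gives $\left(\operatorname{ess\,sup}_x\int_{-T}^T|\partial_x^2\varphi|^2\,dt\right)^{1/2}\le \int_{-T}^T \left(\operatorname{ess\,sup}_x\int_{-T}^T|\partial_x S(t-\tau)\partial_x g(\tau)(x)|^2\,dt\right)^{1/2}d\tau \le c_1\int_{-T}^T\|\partial_x g(\tau)\|_{L^2}\,d\tau \le c_1\|g\|_{L^1([-T,T];H^1)}$. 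Similarly $\gamma_3$ follows from \eqref{linearest1} (the $L^6_tL^\infty_x$ Strichartz-type estimate) applied to $\partial_x g(\tau)$, and $\gamma_4$ follows from \eqref{linearest2} together with the $H^1\hookrightarrow L^2$ embedding and the normalizing factor $\frac{1}{1+T}$ which absorbs the $(1+T)$ appearing in \eqref{linearest2}. Taking the maximum over $i=1,2,3,4$ yields $\|\varphi\|_{X_T}=\Gamma(\varphi,T)\le c_1\|g\|_{L^1([-T,T];H^1(\mathbb{R}))}$.

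The remaining point is to check that $\varphi\in X_T$, i.e. that $\varphi\in C([-T,T];H^1(\mathbb{R}))$: this is standard, since $t\mapsto S(t)$ is strongly continuous on $H^1(\mathbb{R})$ and $g\in L^1([-T,T];H^1(\mathbb{R}))$, so the Bochner integral $\int_0^t S(t-\tau)g(\tau)\,d\tau$ depends continuously on $t$. The main (and essentially only) technical obstacle is the careful application of Minkowski's integral inequality in each of the mixed space-time norms — in particular being attentive to the order of the $\operatorname{ess\,sup}_x$ and $\int dt$ operations in $\gamma_2$, where the sup is outside the time integral, so one must verify that the pointwise-in-$x$ bound survives after integrating in $\tau$; but since \eqref{linearest3} provides a bound uniform in $x$ for each fixed $\tau$, this causes no real difficulty. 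Everything else is a routine repetition of the argument already used for Lemma \ref{lema2}.
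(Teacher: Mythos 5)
Your proof is correct and follows essentially the same route as the paper: both reduce each seminorm $\gamma_i(\varphi,T)$ to the corresponding linear estimate of Lemma \ref{lema1} by Minkowski's integral inequality applied to the Duhamel integral (the paper merely writes this with a characteristic function $\psi^t(\tau)$ so as to integrate over all of $[-T,T]$), using the $H^1$-isometry of $S(t)$ for $\gamma_1$ and \eqref{linearest3}, \eqref{linearest1}, \eqref{linearest2} for $\gamma_2$, $\gamma_3$, $\gamma_4$ respectively. No gaps.
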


\begin{proof} We denote by $\psi^t(\tau)$ the characteristic
function over the interval $[0,t]$ for $0\leq t<T$. Therefore, from
$(\ref{eqintg})$ we can write

\begin{equation}\label{eqint1}
\varphi(\cdot,t)=\displaystyle\int_{-T}^{T}\psi^t(\tau)S(t-\tau)g(\cdot,\tau)d\tau.
\end{equation}

Then, from $(\ref{linearest3})$ we conclude that
\begin{equation}\label{est2}\begin{array}{lll}
\gamma_2(\varphi,T)&\leq& \displaystyle\int_{-T}^{T}\left(ess
\sup_{x\in\mathbb{R}}\int_{-T}^{T}[\partial_x^2
(\psi^t(\tau)S(t-\tau)g(\cdot,\tau))]^2dt\right)^{\frac{1}{2}}d\tau
\\\\
&\leq&\displaystyle\int_{-T}^{T}\left(ess
\sup_{x\in\mathbb{R}}\int_{-T}^{T}[\partial_x((\tau)S(t')
\partial_xg(\cdot,\tau))]^2dt'\right)^{\frac{1}{2}}d\tau
\\\\
&\leq&\displaystyle\int_{-T}^{T}||g(\cdot,\tau)||_{H^1}d\tau
=\frac{\sqrt{3}}{3}||g||_{L^1([-T,T];H^1(\mathbb{R}))}
\end{array}\end{equation}

 On the other hand, since $S(t):H^1(\mathbb{R})\rightarrow
H^1(\mathbb{R})$ is an isometry, we deduce from Bochner Theorem
that

\begin{equation}\label{est3}
\begin{array}{lll}
||\varphi(t)||_{H^1(\mathbb{R})}&=&\displaystyle\left\|\int_{-T}^{T}\psi^t(\tau)S(t-\tau)
g(\cdot,\tau)d\tau\right\|_{H^1(\mathbb{R})}\\\\
&\leq&\displaystyle\int_{-T}^{T}||S(t-\tau)g(\cdot,\tau)||_{H^1(\mathbb{R})}d\tau\leq
c_1||g||_{L^1([-T,T];H^1(\mathbb{R}))}.
\end{array}
\end{equation}
Thus $\gamma_1(\varphi,T)\leq
c_1||g||_{L^1([-T,T];H^1(\mathbb{R}))}$. Considering similar ideas
used to prove $(\ref{est2})$, we deduce from $(\ref{linearest1})$
and $(\ref{linearest3})$ that
$$\gamma_i(\varphi,T)\leq c_1||g||_{L^1([-T,T];H^1(\mathbb{R}))},$$ where $i=3,4$.
\end{proof}

We turn our attention to the non-linear term $uu_x$ present in
equation (1.1).  In fact, from \cite{KPV91} we have the following
statement:

\begin{lema}\label{lema4}
Let $T>0$ and let $u$, $w\in X_T$. Then,
\begin{equation}\label{bilinearest1}
||(uw)_x||_{L^1([-T,T];H^1(\mathbb{R}))}\leq
4(1+\sqrt{2})T^{1/2}(1+T)||u||_{X_{T}}||w||_{X_{T}}.
\end{equation}
\end{lema}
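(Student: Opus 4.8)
The plan is to estimate the $H^1$-norm of $(uw)_x$ pointwise in $t$ and then integrate over $[-T,T]$, bounding everything by the quantities $\gamma_i(\cdot,T)$ that make up $\|\cdot\|_{X_T}$. Writing $(uw)_x = u_x w + u w_x$, we have in $H^1$ that
$$
\|(uw)_x(t)\|_{H^1} \le \|u_x(t)w(t)\|_{H^1} + \|u(t)w_x(t)\|_{H^1},
$$
and by symmetry it suffices to handle $\|u_x w\|_{H^1}$. Expanding the $H^1$-norm, this requires controlling $\|u_x w\|_{L^2}$ and $\|(u_x w)_x\|_{L^2} \le \|u_{xx}w\|_{L^2} + \|u_x w_x\|_{L^2}$. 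The natural splits are: $\|u_x w\|_{L^2} \le \|u_x\|_{L^\infty}\|w\|_{L^2}$ (or $\|u_x\|_{L^2}\|w\|_{L^\infty}$, using the Sobolev embedding $H^1(\mathbb{R}) \hookrightarrow L^\infty(\mathbb{R})$ with constant $\le \sqrt 2$ in the convention $\|w\|_{L^\infty}^2 \le \sqrt 2\,\|w\|_{L^2}\|w_x\|_{L^2}\le\sqrt2\|w\|_{H^1}^2$); $\|u_{xx}w\|_{L^2} \le \|u_{xx}\|_{L^\infty}\|w\|_{L^2}$; and $\|u_x w_x\|_{L^2} \le \|u_x\|_{L^\infty}\|w_x\|_{L^2}$ (or the symmetric version). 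The term with $u_{xx}$ is precisely what $\gamma_2$ was designed for.

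Carrying out the time integration, the $\|u_{xx}w\|_{L^2}$-type terms give
$$
\int_{-T}^{T}\|u_{xx}(t)\|_{L^\infty}\|w(t)\|_{L^2}\,dt \le \Big(\int_{-T}^{T}\|u_{xx}(t)\|_{L^\infty}^2\,dt\Big)^{1/2}\Big(\int_{-T}^{T}\|w(t)\|_{L^2}^2\,dt\Big)^{1/2}
$$
by Cauchy–Schwarz; the first factor is $\gamma_2(u,T)$, and the second is bounded by $(2T)^{1/2}\,\mathrm{ess\,sup}_t\|w(t)\|_{L^2} \le (2T)^{1/2}(1+T)\gamma_4(w,T)$. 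The terms of the form $\|u_x\|_{L^\infty}\|w\|_{H^1}$ give, after Hölder with exponents $6$ and $6/5$,
$$
\int_{-T}^{T}\|u_x(t)\|_{L^\infty}\|w(t)\|_{H^1}\,dt \le \Big(\int_{-T}^{T}\|u_x(t)\|_{L^\infty}^6\,dt\Big)^{1/6}(2T)^{5/6}\,\mathrm{ess\,sup}_t\|w(t)\|_{H^1} = (2T)^{5/6}\gamma_3(u,T)\gamma_1(w,T),
$$
and a symmetric estimate swapping the roles of $u$ and $w$. In every case one extracts a power of $T$ of the form $T^{1/2}(1+T)$ (after using $T^{5/6}\le T^{1/2}(1+T)$ for $T$ in the relevant ranges, or rather bounding $T^{5/6}$ crudely by a constant times $T^{1/2}(1+T)$ on $(0,\infty)$ is false, so one instead keeps $T^{5/6}$ and notes $2^{5/6}T^{5/6}\le C\,T^{1/2}(1+T)$ which does hold), and the remaining factors are products of two of the $\gamma_i$'s, hence bounded by $\|u\|_{X_T}\|w\|_{X_T}$. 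Collecting the finitely many terms with their numerical constants yields the bound with the explicit constant $4(1+\sqrt2)$.

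**The main obstacle** is purely bookkeeping: choosing, for each of the (four-ish) summands coming from the product rule applied twice, the right Hölder pair and the right distribution of $L^\infty$ versus $L^2$ norms so that only the controlled quantities $\gamma_1,\dots,\gamma_4$ appear — in particular making sure the $\gamma_2$-norm (the $L^2_tL^\infty_x$ norm of $u_{xx}$) is the only place a second derivative lands, and that the powers of $T$ all consolidate into $T^{1/2}(1+T)$. The Sobolev embedding constant $\sqrt 2$ is what produces the $(1+\sqrt2)$ factor, and tracking it honestly through each term is the only delicate point; there is no analytic difficulty beyond elementary interpolation inequalities on $\mathbb{R}$.
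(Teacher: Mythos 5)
Your strategy is sound and is, in substance, the standard one: note first that the paper itself offers no proof of this lemma --- it is quoted directly from Kenig--Ponce--Vega \cite{KPV91} --- so the real comparison is with the proof there, which proceeds essentially as you propose: Leibniz rule, a pointwise-in-$x$ H\"older split placing the highest derivative in $L^\infty_x$, and then H\"older in $t$ with exponents chosen so that each resulting factor is one of $\gamma_1,\dots,\gamma_4$. Carried out consistently, this does yield a bound of the form $C\,T^{1/2}(1+T)\,\|u\|_{X_T}\|w\|_{X_T}$, which is all the paper ever uses.

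Three bookkeeping points, however. (i) In the $\gamma_3$-terms the companion factor must be closed with $\gamma_1(w,T)=\mathrm{ess\,sup}_t\|w(t)\|_{H^1}$ and \emph{not} with $(1+T)\gamma_4(w,T)$: the latter produces $T^{5/6}(1+T)$, which is not $O(T^{1/2}(1+T))$ as $T\to\infty$. Your displayed line for the $\gamma_3$-term does this correctly, but the first option you list for $\|u_xw\|_{L^2}$ has to be closed the same way (harmless, since $\|w\|_{L^2}\le\|w\|_{H^1}$); the point is that a wrong pairing here silently breaks the power of $T$. (ii) Your aside on powers of $T$ is garbled: $T^{5/6}\le T^{1/2}(1+T)$ holds for \emph{all} $T>0$ with constant $1$ (use $T^{5/6}\le T^{1/2}$ for $T\le1$ and $T^{5/6}\le T^{3/2}$ for $T\ge1$), so your parenthetical claim that such a bound ``on $(0,\infty)$ is false'' is itself wrong, and no detour is needed. (iii) The constant $4(1+\sqrt2)$ is asserted rather than computed; a direct tally of your own estimates (triangle inequality on the $H^1$-norm, the factor $2$ on the cross term $u_xw_x$, the factors $(2T)^{5/6}$ and $(2T)^{1/2}$ coming from the length of $[-T,T]$) comes out somewhat larger, so reproducing the stated numerical constant would require optimizing the splittings term by term. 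None of this is a structural gap, but as written your argument proves the inequality only with an unspecified absolute constant in place of $4(1+\sqrt2)$; for the contraction argument in Section 2 that is entirely sufficient.
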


 As a consequence of Lemmas $\ref{lema3}$ and $\ref{lema4}$ if we
 define
\begin{equation}\label{eqintbilinear}
\varphi(\cdot,t)=\int_{0}^{t}S(t-\tau)(uw)_xd\tau,
\end{equation}
for $t\in [-T,T],T>0$ and $u,v \in X_T$, we obtain that

\begin{equation}\label{bilinearest2}
||\varphi||_{X_{T}}\leq
4(1+\sqrt{2})c_1T^{1/2}(1+T)||u||_{X_{T}}||w||_{X_{T}}.
\end{equation}

In order to obtain an analogous estimate as in
$(\ref{estXTnonhom})$, it is necessary to analyze the term
$a(\cdot)u$. In fact,
\begin{equation}\label{L2ax}
\displaystyle\int_{-T}^{T}||a(\cdot)u(t)||_{L^2}dt
\leq||a||_{W^{2,\infty}}\int_{-T}^{T}||u(t)||_{L^2}dt
\leq\sqrt{2}||a||_{W^{2,\infty}}T||u||_{X_T},
\end{equation}
and
\begin{equation}\label{H1ax}\begin{array}{lll}
\displaystyle\int_{-T}^{T}||(a(\cdot)u(t))_x||_{L^2}dt
&\leq&\displaystyle\sqrt{2}\int_{-T}^{T}||a_xu(t)||_{L^2}dt
+\sqrt{2}\int_{-T}^{T}||au_x(t)||_{L^2}dt\\\\
&\leq&\sqrt{2}||a||_{W^{2,\infty}}T||u||_{X_T}.
\end{array}\end{equation}
Therefore,
\begin{equation}\label{estax}
||a(\cdot)u||_{L^1([-T,T];H^1(\mathbb{R}))}\leq2\sqrt{2}||a||_{W^{2,\infty}}T||u||_{X_T}.
\end{equation}

From $(\ref{estax})$ we obtain the following result:

\begin{lema}\label{lema5}
Let $T>0$ and $u\in X_T$. Consider the integral equation given by
\begin{equation}\label{eqintax}
\varphi(\cdot,t)=\int_{0}^{t}S(t-\tau)(a(\cdot)u(\tau))d\tau,\ \ \ \
t\in[0,T].
\end{equation}
Then
\begin{equation}\label{esteqintax}
||\varphi||_{X_T}\leq 2\sqrt{2}c_1||a||_{W^{2,\infty}}T||u||_{X_T},
\end{equation}
where $c_1>1$ is the constant given in Lemma $\ref{lema1}$.
\end{lema}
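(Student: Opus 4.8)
The plan is to recognize that this lemma is an immediate corollary of Lemma~\ref{lema3}, applied with the forcing term $g(\cdot,\tau)=a(\cdot)u(\cdot,\tau)$. First I would check that this is an admissible datum for that lemma. Since $u\in X_T\subset C([-T,T];H^1(\mathbb{R}))$ and $a\in W^{2,\infty}(\mathbb{R})$, multiplication by $a$ maps $H^1(\mathbb{R})$ into itself (indeed $\|(au)_x\|_{L^2}\le\|a_x\|_{L^\infty}\|u\|_{L^2}+\|a\|_{L^\infty}\|u_x\|_{L^2}$), and the map $\tau\mapsto a(\cdot)u(\cdot,\tau)$ is continuous, hence strongly measurable, from $[-T,T]$ into $H^1(\mathbb{R})$. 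Consequently $a(\cdot)u\in L^1([-T,T];H^1(\mathbb{R}))$, and in fact the quantitative bound is precisely the one already established in $(\ref{estax})$, namely
$$\|a(\cdot)u\|_{L^1([-T,T];H^1(\mathbb{R}))}\le 2\sqrt2\,\|a\|_{W^{2,\infty}}\,T\,\|u\|_{X_T}.$$

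Next I would invoke Lemma~\ref{lema3} with $g:=a(\cdot)u$ (and zero initial data): it asserts that the function $\varphi$ defined by $(\ref{eqintax})$ belongs to $X_T$ and obeys $\|\varphi\|_{X_T}\le c_1\|a(\cdot)u\|_{L^1([-T,T];H^1(\mathbb{R}))}$, where $c_1>1$ is the constant from Lemma~\ref{lema1}. Combining this with the displayed bound above yields
$$\|\varphi\|_{X_T}\le c_1\cdot 2\sqrt2\,\|a\|_{W^{2,\infty}}\,T\,\|u\|_{X_T}=2\sqrt2\,c_1\,\|a\|_{W^{2,\infty}}\,T\,\|u\|_{X_T},$$
which is exactly the asserted inequality.

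Because the analytic content has already been extracted in the derivation of $(\ref{estax})$ and in Lemma~\ref{lema3}, I do not expect any genuine obstacle in this proof. The only mildly delicate points are the verification that $a(\cdot)u$ is a legitimate element of $L^1([-T,T];H^1(\mathbb{R}))$---which rests on the hypothesis $a\in W^{2,\infty}(\mathbb{R})$ guaranteeing that multiplication by $a$ preserves $H^1(\mathbb{R})$---and the bookkeeping of the numerical factor $\sqrt2$, which arises from the length $2T$ of the time interval $[-T,T]$ when estimating the $L^1$-in-time norm. Everything else is a direct application of the linear non-homogeneous estimate.
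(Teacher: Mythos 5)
Your proof is correct and follows exactly the route the paper intends: the lemma is stated immediately after the bound $(\ref{estax})$ precisely so that it follows by applying Lemma~\ref{lema3} with $g=a(\cdot)u$. The verification that $a(\cdot)u$ is an admissible $L^1([-T,T];H^1(\mathbb{R}))$ datum and the tracking of the factor $2\sqrt{2}\,T$ are handled properly, so nothing further is needed.
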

\begin{flushright}
${\square}$
\end{flushright}

\subsection{Local Well-Posedness.} Let us consider an initial data
$u_0\in H^1(\mathbb{R})$. The aim in this subsection is to find a
local solution of $(\ref{kdv1})$ in the mild sense (see
\cite{pazy}), that is, to find the unique fixed point of the map
$\Psi:X_T\rightarrow X_T$ defined by
\begin{equation}
\displaystyle\Psi(u)(t)=S(t)u_0-\int_{0}^{t}S(t-\tau)\left(a(\cdot)u(\tau)
+\frac{1}{2}(u(\tau)^2)_x\right)d\tau,\label{mildintegral}
\end{equation}
for $t\in[-T,T]$. Indeed, first of all it is necessary to show
that $\Psi$  is well-defined. Let $u\in X_T$, then from Lemmas
$\ref{lema4}$ and $\ref{lema5}$ we get
\begin{equation}\begin{array}{lll}
||\Psi(u)(t)||_{X_T}&=&\displaystyle\left\|S(t)u_0+
\int_{0}^{t}S(t-\tau)\left(-a(\cdot)u(\tau)-
\frac{1}{2}(u(\tau)^2)_xd\tau\right)\right\|_{X_{T}}\\\\
&\leq& \displaystyle
c_1||u_0||_{X_{T}}+2\sqrt{2}c_1||a||_{W^{2,\infty}}T||u||_{X_T}+
\frac{c_2}{2}T^{1/2}(1+T)||u||_{X_{T}}^2,
\end{array}\label{estpsi1}\end{equation}
where $c_2=4(1+\sqrt{2})c_1$. Since $u\in X_{T}$ we have for
$\kappa=2c_1||u_0||_{H^1}$ that
\begin{equation}\label{estkappa}
||\Psi(u)(t)||_{X_T}\leq
c_1||u_0||_{H^1}\left(1+4\sqrt{2}c_1||a||_{W^{2,\infty}}T+
2c_1c_2T^{1/2}(1+T)||u_0||_{H^1}\right).
\end{equation}

Taking $0<T_{\kappa}<1$ verifying
\begin{equation}\label{estTkappa}
4\sqrt{2}c_1||a||_{W^{2,\infty}}T_{\kappa}+
2c_1c_2T_{\kappa}^{1/2}(1+T_{\kappa})||u_0||_{H^1}<1,
\end{equation}
it is possible to conclude
$\displaystyle||\Psi(u)(t)||_{X_T}<2c_1||u_0||_{H^1}=\kappa$. \\
\indent Next, we prove that $\Psi$ in $(\ref{mildintegral})$ is a
strict contraction. In fact, let $u$, $w\in X_T$; then
\begin{equation}\begin{array}{lll}
||\Psi(u)-\Psi(w)||_{X_T}
&\leq&\displaystyle\left\|\int_{0}^{t}S(t-\tau)(a(\cdot)(u-w)(\tau)d\tau\right\|_{X_T}+\\\\
&+&
\displaystyle\frac{1}{2}\left\|\int_{0}^{t}S(t-\tau)(u(\tau)^2-w(\tau)^2)_xd\tau\right\|_{X_T}.
\end{array}
\label{contraction1}
\end{equation}

From the equality  $(u-w)^2=(u-w)(u+w)$, Lemmas $\ref{lema4}$ and
$\ref{lema5}$ and since $\kappa=2c_1||u_0||_{H^1}$ we obtain,
\begin{equation}
\begin{array}{lll}
||\Psi(u)-\Psi(w)||_{X_T} &\leq&\displaystyle
2\sqrt{2}c_1||a||_{W^{2,\infty}}T||u-w||_{X_T}+
\frac{c_2}{2}T^{1/2}(1+T)||u-w||_{X_{T}}||u+w||_{X_{T}}\\\\
&\leq&\displaystyle\left(2\sqrt{2}c_1||a||_{W^{2,\infty}}T+2c_1c_2
T^{1/2}(1+T)||u_0||_{H^1}\right)||u-w||_{X_{T}}.
\end{array}
\label{contraction2}
\end{equation}

Since $0<T_{\kappa}<1$ was chosen verifying
$$4\sqrt{2}c_1||a||_{W^{2,\infty}}T_{\kappa}+
2c_1c_2T_{\kappa}^{1/2}(1+T_{\kappa})||u_0||_{H^1}<1,$$ we
guarantee the existence of $0<\alpha<1$ such that
$$||\Psi(u)-\Psi(w)||_{X_T}<\alpha||u-w||_{X_T}.$$

Then, $\Psi: X_{T_{\kappa}}\rightarrow X_{T_{\kappa}}$ is a strict
contraction and therefore, from Banach fixed point Theorem, it is
possible to conclude that problem $(\ref{kdv1})$ possesses a
unique
(mild) solution.\\
\indent This argument proves the following Theorem.

\begin{teo}
Suppose that $a\in W^{2,\infty}(\mathbb{R})$ satisfies the
assumption $(H1)$ and consider that $u_0\in H^1(\mathbb{R})$.
Given $\kappa>0$ and $0<T_{\kappa}<1$ defined in
$(\ref{estTkappa})$; then, there is a unique mild solution $u\in
X_{T_{\kappa}}$ of equation $(\ref{kdv1})$. Moreover, the map
$$u_0\in
H^1(\mathbb{R})\mapsto u\in C_B([-T',T']; H^1(\mathbb{R})),$$ is
continuous for an appropriate $0<T'<T_{\kappa}<1$.
\label{teo1}\end{teo}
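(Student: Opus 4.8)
The plan is to read off existence and uniqueness directly from the contraction estimates already assembled in this section, and then to obtain the continuous dependence by a second, entirely parallel, contraction-type estimate for the difference of two solutions.

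First, for existence and uniqueness: with $\kappa=2c_1||u_0||_{H^1}$ and $0<T_\kappa<1$ chosen so that $(\ref{estTkappa})$ holds, inequality $(\ref{estkappa})$ shows that $\Psi$ maps the closed ball $X_{T_\kappa}^k$ of $(\ref{XT})$ into itself, while $(\ref{contraction2})$ shows that $\Psi$ is a strict contraction there with some constant $0<\alpha<1$. Since $X_{T_\kappa}^k$ is a closed subset of the Banach space $(X_{T_\kappa},||\cdot||_{X_{T_\kappa}})$, the Banach fixed point theorem yields a unique fixed point $u\in X_{T_\kappa}^k$, i.e.\ the unique mild solution of $(\ref{kdv1})$ in that ball; uniqueness in all of $X_{T_\kappa}$ then follows by the standard observation that any mild solution stays in a ball of radius $\kappa$ on a sufficiently short subinterval, after which a continuation/connectedness step identifies it with the fixed point.

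Second, for the continuous dependence I would fix $u_0\in H^1(\mathbb{R})$, set $M=2||u_0||_{H^1}$, and choose $0<T'<1$ depending only on $M$ and $||a||_{W^{2,\infty}}$ so that $4\sqrt2\,c_1||a||_{W^{2,\infty}}T'+2c_1c_2(T')^{1/2}(1+T')M<1$; by $(\ref{estTkappa})$ this single $T'$ is admissible for $u_0$ and for every $v_0\in H^1(\mathbb{R})$ with $||v_0||_{H^1}\le M$, and the associated solutions $u,v$ then lie in the ball of radius $2c_1M$ in $X_{T'}$. Subtracting the two Duhamel formulas $(\ref{mildintegral})$ gives
\begin{equation}\label{diffform}
u(t)-v(t)=S(t)(u_0-v_0)-\int_0^t S(t-\tau)\Big(a(\cdot)(u-v)(\tau)+\frac{1}{2}\big((u+v)(u-v)\big)_x(\tau)\Big)d\tau,
\end{equation}
and applying Lemma $\ref{lema2}$ to the free term together with Lemmas $\ref{lema4}$ and $\ref{lema5}$ to the integral terms — exactly as in the derivation of $(\ref{contraction2})$, now using $||u+v||_{X_{T'}}\le 4c_1M$ — produces
\begin{equation}\label{diffest}
||u-v||_{X_{T'}}\le c_1||u_0-v_0||_{H^1}+\Big(2\sqrt2\,c_1||a||_{W^{2,\infty}}T'+2c_1c_2(T')^{1/2}(1+T')M\Big)||u-v||_{X_{T'}}.
\end{equation}
Since the coefficient of $||u-v||_{X_{T'}}$ on the right is $<1$ by the choice of $T'$, I can absorb it to the left and conclude $||u-v||_{X_{T'}}\le C\,||u_0-v_0||_{H^1}$ with $C=C(M,||a||_{W^{2,\infty}},c_1)$. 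Because $||\cdot||_{C_B([-T',T'];H^1)}\le\gamma_1(\cdot,T')\le||\cdot||_{X_{T'}}$, this is precisely the Lipschitz, hence continuous, dependence of $u$ on $u_0$ claimed in the theorem.

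The step I expect to be the main obstacle is the bookkeeping around the dependence of the existence time on the data: the time $T_\kappa$ furnished by the fixed point argument shrinks as $||u_0||_{H^1}$ grows, so to even phrase ``$u_0\mapsto u\in C_B([-T',T'];H^1)$'' as a map into a \emph{single} space one has to first localize to a bounded $H^1$-neighbourhood of $u_0$ and pick $T'$ uniformly over that neighbourhood, as above. Once that is done, $(\ref{diffest})$ is a mechanical rerun of $(\ref{contraction2})$, and the remaining verifications — closedness of the ball, completeness of $X_{T'}$ via subadditivity of $\Gamma$, and the fact that the linear estimates of Lemma $\ref{lema1}$ control all four seminorms $\gamma_i$ — have already been dispatched in Lemmas $\ref{lema2}$–$\ref{lema5}$.
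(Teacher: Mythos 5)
Your proposal is correct and, for the existence--uniqueness part, follows exactly the paper's route: the self-mapping estimate $(\ref{estkappa})$ on the ball $X_{T_\kappa}^{k}$ together with the contraction bound $(\ref{contraction2})$ and Banach's fixed point theorem. The continuous-dependence part is simply asserted in the paper with no proof, and your Lipschitz estimate --- subtracting the two Duhamel formulas, reusing Lemmas $\ref{lema2}$, $\ref{lema4}$ and $\ref{lema5}$ on a time $T'$ chosen uniformly over a bounded $H^1$-neighbourhood of $u_0$, and absorbing the small coefficient --- is the standard and correct way to supply it.
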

\begin{flushright}
${\square}$
\end{flushright}
\begin{obs}
It is possible to deduce, taking the duality product in  equation
(\ref{kdv1}) that if $u$ is a solution in the mild sense, then $u$
is also a solution in the weak sense, and vice-versa, the weak
solution which belongs to $X_T$ and is the fixed point of $\Psi$,
is the unique mild solution. \label{obs1}\end{obs}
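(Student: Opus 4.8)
The plan is to pass between the two formulations by pairing against test functions and exploiting the group properties of $\{S(t)\}_{t\in\mathbb{R}}$: each $S(t)$ is a unitary operator on $L^2(\mathbb{R})$ and on $H^1(\mathbb{R})$, $S(t)^*=S(-t)$, the infinitesimal generator is $-\partial_x^3$ so that $\frac{d}{dt}S(t)\psi=-\partial_x^3S(t)\psi$ for smooth $\psi$, and $S(t)$ maps the Schwartz class into itself. A preliminary observation is that, for $u\in X_T$, the function
\begin{equation*}
g(t):=-a(\cdot)u(t)-\tfrac12\bigl(u(t)^2\bigr)_x
\end{equation*}
belongs to $L^1([-T,T];H^1(\mathbb{R}))$: this is exactly what Lemma \ref{lema4} and estimate (\ref{estax}) provide. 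Hence the Duhamel expression $\Psi(u)(t)=S(t)u_0+\int_0^tS(t-\tau)g(\tau)\,d\tau$ of (\ref{mildintegral}) is well defined in $C([-T,T];H^1(\mathbb{R}))$, and ``$u$ is a mild solution'' means precisely ``$u=\Psi(u)$''.

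For the implication mild $\Rightarrow$ weak, I would start from $u=\Psi(u)$, fix $\psi\in C_c^\infty(\mathbb{R})$, and use $S(t)^*=S(-t)$ together with Fubini to write $\langle u(t),\psi\rangle=\langle u_0,S(-t)\psi\rangle+\int_0^t\langle g(\tau),S(\tau-t)\psi\rangle\,d\tau$. Because $\psi$ is smooth, the maps $t\mapsto S(-t)\psi$ and $t\mapsto S(\tau-t)\psi$ are $C^1$ with derivatives $\partial_x^3S(-t)\psi$ and $\partial_x^3S(\tau-t)\psi$, while $g\in L^1([-T,T];H^1)$; differentiating under the integral sign then shows that $t\mapsto\langle u(t),\psi\rangle$ is absolutely continuous with
\begin{equation*}
\frac{d}{dt}\langle u(t),\psi\rangle=\langle u(t),\psi_{xxx}\rangle+\langle g(t),\psi\rangle .
\end{equation*}
(This is the usual characterization of mild solutions of $u_t+u_{xxx}=g$; cf. \cite{pazy}.) Multiplying by an arbitrary $\theta\in C_c^\infty(-T,T)$, integrating in $t$ and integrating by parts once in $t$, one obtains
\begin{equation*}
\int_{-T}^{T}\!\int_{\mathbb{R}}u\,(\phi_t+\phi_{xxx})\,dx\,dt=\int_{-T}^{T}\!\int_{\mathbb{R}}\Bigl(a(\cdot)u\,\phi-\tfrac12u^2\phi_x\Bigr)dx\,dt
\end{equation*}
for $\phi(x,t)=\psi(x)\theta(t)$; density of such products among the test functions on $\mathbb{R}\times(-T,T)$ gives the weak (distributional) form of (\ref{kdv1}), the initial datum $u(\cdot,0)=u_0$ being read off from the absolute continuity of $t\mapsto\langle u(t),\psi\rangle$ at $t=0$.

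For the converse, let $u\in X_T$ be a weak solution of (\ref{kdv1}). Then again $g\in L^1([-T,T];H^1(\mathbb{R}))$, and the weak formulation is equivalent to the statement that, for every Schwartz function $\psi$ (first for $\psi\in C_c^\infty(\mathbb{R})$, then extended by a cut-off and mollification argument in $x$), the map $s\mapsto\langle u(s),\psi\rangle$ is absolutely continuous with a.e. derivative $\langle u(s),\psi_{xxx}\rangle+\langle g(s),\psi\rangle$. Fixing $t\in(0,T]$ and a Schwartz function $\psi$, I would apply this with the time-dependent test function $\eta(s):=S(s-t)\psi$, which is Schwartz for each $s$ and solves $\eta_s+\eta_{xxx}=0$; a product-rule computation --- legitimate since $u\in C([-T,T];H^1(\mathbb{R}))$ and $s\mapsto\eta(s)$ is $C^1$ with Schwartz values --- shows that $s\mapsto\langle u(s),\eta(s)\rangle$ is absolutely continuous with derivative $\langle g(s),\eta(s)\rangle$, the term $\langle u(s),\eta(s)_{xxx}\rangle$ being cancelled by $\langle u(s),\eta_s(s)\rangle$. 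Integrating from $0$ to $t$ and using $S(s-t)^*=S(t-s)$, $\eta(t)=\psi$ and $\eta(0)=S(-t)\psi$ gives
\begin{equation*}
\langle u(t),\psi\rangle=\Bigl\langle S(t)u_0+\int_0^tS(t-\tau)g(\tau)\,d\tau,\ \psi\Bigr\rangle ,
\end{equation*}
and, $\psi$ ranging over a dense subset of $L^2(\mathbb{R})$, $u(t)=\Psi(u)(t)$. Thus the weak solution $u$ is a fixed point of $\Psi$ in $X_T$, hence by the uniqueness of that fixed point in Theorem \ref{teo1} (extended to all of $X_T$ by the standard continuation argument) it is the mild solution.

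The algebraic steps above --- Fubini, differentiation under the integral sign, the product rule --- are routine. The point that genuinely requires care, and which I expect to be the main obstacle, is that $S(t)$ does not preserve compact support in $x$, so the test functions $\eta(s)=S(s-t)\psi$ used in the converse are merely Schwartz, not compactly supported; consequently one must first upgrade the weak formulation of (\ref{kdv1}) from $C_c^\infty$ test functions to Schwartz test functions (equivalently, to sufficiently regular and spatially decaying time-dependent test functions). This upgrade is where the membership $u\in C([-T,T];H^1(\mathbb{R}))$ built into $X_T$ and the integrability $g\in L^1([-T,T];H^1(\mathbb{R}))$ from Lemma \ref{lema4} and (\ref{estax}) are used, and once it is available the two implications together yield the asserted equivalence.
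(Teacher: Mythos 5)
Your argument is correct and follows exactly the route the paper hints at (the paper offers no detailed proof of this remark beyond the phrase ``taking the duality product''): pairing the Duhamel formula against test functions via $S(t)^{*}=S(-t)$ for one direction, and testing the weak formulation with $\eta(s)=S(s-t)\psi$ for the converse, with the integrability of $g=-a(\cdot)u-\tfrac12(u^2)_x$ in $L^1([-T,T];H^1(\mathbb{R}))$ supplied by Lemma \ref{lema4} and (\ref{estax}). You also correctly flag the only genuinely delicate point --- upgrading from compactly supported to Schwartz test functions, since $S(t)$ does not preserve compact support --- so the write-up is a complete and faithful execution of the intended argument.
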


\subsection{Global Well-Posedness.} A result of global well-posedness associated to equation
$(\ref{kdv1})$ for given initial data $u_0\in H^1(\mathbb{R})$,
will be established in this subsection. Indeed, from local
well-posedness result (see Theorem $\ref{teo1}$), it is possible
to obtain some estimates in $H^1(\mathbb{R})$ in order to get the
required result. Before going on, we restrict ourselves on mild
solutions, obtained above, such that $u\in
C_B([0,T'];H^1(\mathbb{R}))$ instead of $u\in
C_B([-T',T'];H^1(\mathbb{R}))$, for an appropriate
$0<T'<T_{\kappa}<1$ (see Theorem $\ref{teo1}$) to obtain the
exponential decay in $L^2-$norm (which we shall present in the
next section).\\
\indent Multiplying $(\ref{kdv1})$ by $u$ and integrating over
$\mathbb{R}\times (0,T')$, where $0<t<T'$ and $T'>0$ is given in
Theorem $\ref{teo1}$, we get

\begin{equation}\label{estI}
\displaystyle\int_{\mathbb{R}}|u|^2dx=-2\int_{0}^{t}\int_{\mathbb{R}}a(x)|u|^2dxds+||u_0||_{L^2}^2\leq||u_0||_{L^2}^2.
\end{equation}

Therefore we can conclude that

\begin{equation}\label{limit1}
u\ \mbox{is bounded in}\ L^{\infty}([0,+\infty);L^2(\mathbb{R})).
\end{equation}

Next, multiplying the first equation in $(\ref{kdv1})$ by
$\displaystyle u^2+2u_{xx}$ and integrating in $x\in\mathbb{R}$ we
have the following equality:

\begin{equation}
\displaystyle\frac{d}{dt}\int_{\mathbb{R}}\left[\frac{u^3}{3}-u_x^2\right]dx+\int_{\mathbb{R}}a(x)u^3dx+2\int_{\mathbb{R}}a(x)uu_{xx}dx=0.
\label{conservada1}
\end{equation}

But
$$\displaystyle\int_{\mathbb{R}}a(x)uu_{xx}dx=\frac{1}{2}\int_{\mathbb{R}}a_{xx}(x)u^2dx-\int_{\mathbb{R}}a(x)u_x^2dx,$$
and therefore, from $(\ref{conservada1})$ we obtain after
integrating once in $t\in [0,T')$ that

\begin{equation}\begin{array}{lll}
\displaystyle\int_{\mathbb{R}}\left[u_x^2-\frac{u^3}{3}\right]dx
&-&\displaystyle\int_{0}^{t}\int_{\mathbb{R}}a(x)u^3dxds-\int_{0}^{t}
\int_{\mathbb{R}}a_{xx}(x)u^2dxds\\\\
&+&\displaystyle 2\int_{0}^{t}\int_{\mathbb{R}}a(x)u_x^2dxds=
\int_{\mathbb{R}}\left[u_{0_{x}}^2-\frac{u_0^3}{3}\right]dx.
\end{array}\label{conservada2}\end{equation}

Combining (\ref{estI}) and (\ref{conservada2}) and using
Gagliardo-Nirenberg and Young inequalities we deduce that

\begin{equation}
||u(t)||_{H^1}^2\leq (3||a||_{W^{2,\infty}}+ \frac 1 2 +C_{\frac 1
2}||u_{0}||_{H^1}^{\frac 4 3}+
||u_{0}||_{H^1})\int_{0}^{t}||u(s)||_{H^1}^2ds+C_5(||u_{0}||_{H^1}^{2}+||u_{0}||_{L^2}^{\frac
{10}3}+ ||u_{0}||_{L^2}^{3}). \label{estH1}\end{equation}

 From Gronwall
inequality we have,

\begin{equation}
||u(t)||_{H^1}^2\leq
C_5(||u_{0}||_{H^1}^{2}+||u_{0}||_{L^2}^{\frac {10} 3}+
||u_{0}||_{L^2}^{3})e^{(3||a||_{W^{2,\infty}}+ \frac 1 2 +C_{\frac
1 2}||u_{0}||_{H^1}^{\frac 4 3})T}. \label{gronwall}\end{equation}

Inequality $(\ref{gronwall})$ able us to enunciate the next
result,

\begin{teo}
Suppose that $a\in W^{2,\infty}(\mathbb{R})$ satisfies assumption
$(H1)$ and consider $u_0\in H^1(\mathbb{R})$. Then, there exists a
unique mild solution $u$ which belongs to
$$L_{loc}^{\infty}([0,+\infty);H^1(\mathbb{R}))\cap X_T,$$ for all
$T>0$,
and it satisfies the inequality $(\ref{estH1})$. Moreover, the map
$$u_0\in H^1(\mathbb{R})\mapsto u\in C_B([0,T]; H^1(\mathbb{R})),$$ is
continuous for all $T>0$. \label{teo2}
\end{teo}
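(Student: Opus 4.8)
The plan is to upgrade the local-in-time solution furnished by Theorem \ref{teo1} to a global one by means of an a priori estimate in $H^1(\mathbb{R})$ combined with the standard continuation argument. First I would invoke Theorem \ref{teo1} to obtain, for initial data $u_0\in H^1(\mathbb{R})$, the unique mild solution $u\in X_{T_\kappa}$ on a time interval $[0,T']$ with $0<T'<T_\kappa<1$; by Remark \ref{obs1} this mild solution is also a weak solution, so the formal manipulations used to derive the identities (\ref{estI}) and (\ref{conservada2}) are justified (more precisely, one should run them on smooth approximations $u_0^n\to u_0$ in $H^1$, use the continuous dependence from Theorem \ref{teo1}, and pass to the limit — the density argument is already flagged in the introduction). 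The $L^2$ identity (\ref{estI}) immediately gives the bound $\|u(t)\|_{L^2}\le\|u_0\|_{L^2}$ for all $t$ in the interval of existence, which is what feeds into the right-hand side constants of (\ref{estH1}).

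Next I would carry out the $H^1$ a priori estimate. Multiplying the equation by $u^2+2u_{xx}$, integrating in $x$, using the integration-by-parts identity $\int a\,u u_{xx} = \tfrac12\int a_{xx}u^2 - \int a\,u_x^2$, and integrating once in $t$ yields (\ref{conservada2}). One then isolates $\int u_x^2\,dx$, controls the cubic terms $\int_0^t\!\int a u^3$, $\int_0^t\!\int a_{xx}u^2$, and the initial cubic $\int u_0^3$ via the Gagliardo–Nirenberg inequality $\|v\|_{L^3}^3\le C\|v\|_{L^2}^{5/2}\|v_x\|_{L^2}^{1/2}$ (or the simpler $\|v\|_{L^\infty}^2\le \|v\|_{L^2}\|v_x\|_{L^2}$) together with Young's inequality to absorb the top-order $\|u_x\|_{L^2}^2$ into the left side with a small coefficient, leaving the remaining powers of $\|u\|_{L^2}$ which are already bounded by $\|u_0\|_{L^2}$. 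Adding back (\ref{estI}) to reconstitute the full $H^1$ norm produces exactly the integral inequality (\ref{estH1}), and Gronwall's lemma then gives (\ref{gronwall}): $\|u(t)\|_{H^1}^2$ is bounded on $[0,T']$ by a constant depending only on $\|u_0\|_{H^1}$ and $T'$, not blowing up as $t\uparrow T'$.

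Finally I would close the argument by continuation: since the local existence time $T_\kappa$ in Theorem \ref{teo1} depends on the data only through $\|u_0\|_{H^1}$ (via (\ref{estTkappa})), and since (\ref{gronwall}) bounds $\|u(t)\|_{H^1}$ on any finite interval by a finite constant $M(T,\|u_0\|_{H^1})$, one can restart the local solver at successive times with uniformly controlled $H^1$ norm and thereby extend the solution to $[0,T]$ for every $T>0$; hence $u\in L^\infty_{loc}([0,+\infty);H^1(\mathbb{R}))$, and $u\in X_T$ for all $T>0$ follows by applying Lemmas \ref{lema2}, \ref{lema3}, \ref{lema4}, \ref{lema5} on each finite window with the now-global $H^1$ bound. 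Uniqueness on $[0,T]$ follows from the local uniqueness applied on overlapping subintervals, and the continuity of the data-to-solution map $u_0\mapsto u\in C_B([0,T];H^1(\mathbb{R}))$ is inherited from the local continuity of Theorem \ref{teo1} chained across the finitely many steps, using that the $H^1$ bound (\ref{gronwall}) depends continuously on $\|u_0\|_{H^1}$. The main obstacle is the rigorous justification of the energy identity (\ref{conservada2}) at the regularity of a mild solution: a bare $X_T$ solution need not have enough smoothness to multiply by $u_{xx}$ and integrate by parts, so one genuinely needs the smoothing/approximation step — approximate $u_0$ by $H^s$ data with $s$ large, use persistence of regularity and the local theory to get classical solutions for which (\ref{conservada2}) holds, derive the estimate at that level, and then pass to the limit using the continuous dependence already established. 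Everything else is routine Gagliardo–Nirenberg/Young bookkeeping and the classical continuation scheme.
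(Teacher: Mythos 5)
Your proposal is correct and follows essentially the same route as the paper: the local solution from Theorem \ref{teo1}, the $L^2$ identity (\ref{estI}), the multiplier $u^2+2u_{xx}$ leading to (\ref{conservada2}), Gagliardo--Nirenberg and Young to reach (\ref{estH1}), Gronwall to get (\ref{gronwall}), and the standard continuation argument. You also correctly identify the regularity issue in justifying the energy identities for mild solutions and resolve it by smooth approximation and continuous dependence, which is exactly the density argument the paper records in Remark \ref{obs2}.
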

\begin{flushright}
${\square}$
\end{flushright}

\begin{obs}\label{obs2}
The equalities obtained in $(\ref{estI})$, $(\ref{conservada1})$,
and the estimate $(\ref{gronwall})$ were deduced considering
regular solutions for equation $(\ref{kdv1})$, for instance, if we
take an initial data $\bar{u}_{0}\in C_{0}^{\infty}(\mathbb{R})$
and $a\in W^{s,\infty}(\mathbb{R})$ for $s>0$ large enough, the
local solution $\bar{u}\in X_T$ given by Theorem $\ref{teo1}$, for
some $T>0$, coincides with the classical solution, which exists
globally and belongs to
$C^{\infty}(\mathbb{R}\times{\mathbb{R}})$. By density arguments
we get identities $(\ref{estI})$ and $(\ref{conservada1})$ for a
mild solution $u\in X_T$ belonging to
$C_{B}(\mathbb{R};H^1(\mathbb{R}))$ and satisfying the inequality
$(\ref{gronwall})$.
\end{obs}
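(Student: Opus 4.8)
The plan is to justify the remark in two stages: first show that smooth, compactly supported data generate genuinely smooth, globally defined solutions, and then transfer the identities $(\ref{estI})$, $(\ref{conservada1})$ and the bound $(\ref{gronwall})$ to arbitrary mild solutions by approximation. First I would rerun the contraction argument of Theorem $\ref{teo1}$ at higher regularity. The linear estimates of Lemma $\ref{lema1}$ hold verbatim in every $H^s(\mathbb{R})$, $s\geq 1$, so for $a\in W^{s,\infty}(\mathbb{R})$ and $\bar u_0\in C_0^\infty(\mathbb{R})\subset H^s(\mathbb{R})$ one obtains a local solution in the $H^s$-analogue of $X_T$ on an interval whose length depends only on $\|\bar u_0\|_{H^s}$. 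Uniqueness forces this to coincide with the $H^1$ mild solution of Theorem $\ref{teo1}$ on their common interval; since this holds for every $s$, the solution lies in $C([0,T];H^s(\mathbb{R}))$ for all $s$, hence is $C^\infty$ in $x$ by Sobolev embedding. Writing $\bar u_t=-\bar u\,\bar u_x-\bar u_{xxx}-a\bar u$ and differentiating repeatedly trades spatial for temporal regularity, giving $\bar u\in C^\infty(\mathbb{R}\times\mathbb{R})$.

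For global existence I would use the standard $H^s$ energy estimate: applying $\partial_x^s$ to the equation and pairing with $\partial_x^s\bar u$ in $L^2(\mathbb{R})$, the dispersive term drops and a Kato-type integration by parts controls the top-order contribution of $\bar u\,\bar u_x$, yielding $\frac{d}{dt}\|\bar u\|_{H^s}^2\leq C(\|a\|_{W^{s,\infty}})(1+\|\bar u_x\|_{L^\infty})\|\bar u\|_{H^s}^2$. The crucial point is that the mild solution already lies in $X_T$ for every $T$, so its $\gamma_3$-norm $(\ref{gamma3})$ is finite; by H\"older $\int_0^T\|\bar u_x\|_{L^\infty}\,dt\leq T^{5/6}\gamma_3(\bar u,T)<\infty$, and Gronwall then produces a finite $H^s$ bound on every $[0,T]$. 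Combined with the global $H^1$ mild solution of Theorem $\ref{teo2}$, this rules out finite-time blow-up and yields a global smooth solution. For such a solution every integration by parts leading to $(\ref{estI})$, $(\ref{conservada1})$ and the integrated form $(\ref{conservada2})$ is legitimate, since $\bar u(\cdot,t)$ and its derivatives decay rapidly at spatial infinity.

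The density step comes next. Given $u_0\in H^1(\mathbb{R})$, choose $u_0^n\in C_0^\infty(\mathbb{R})$ with $u_0^n\to u_0$ in $H^1(\mathbb{R})$; the continuous dependence in Theorem $\ref{teo2}$ gives the associated global smooth solutions $u^n\to u$ in $C_B([0,T];H^1(\mathbb{R}))$ for every $T$. Each $u^n$ satisfies $(\ref{estI})$, the integrated identity $(\ref{conservada2})$ and the inequality $(\ref{gronwall})$, and I would pass to the limit term by term. In $(\ref{estI})$ and $(\ref{conservada2})$ the quadratic terms $\int u_x^2$, $\int\!\!\int a_{xx}u^2$ and $\int\!\!\int a\,u_x^2$ converge because $u^n\to u$ in $C([0,T];H^1(\mathbb{R}))$ and $a\in W^{2,\infty}(\mathbb{R})$, while the cubic terms $\int u^3$ and $\int\!\!\int a\,u^3$ converge via $\|u^n-u\|_{L^3}^3\leq \|u^n-u\|_{L^\infty}\|u^n-u\|_{L^2}^2$ together with the uniform $H^1(\mathbb{R})\hookrightarrow L^\infty(\mathbb{R})$ bound; the initial-data terms converge trivially. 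Since $(\ref{gronwall})$ and $(\ref{estH1})$ are inequalities with constants depending continuously on $\|u_0^n\|_{H^1}$ and $\|u_0^n\|_{L^2}$, they persist in the limit. This yields $(\ref{estI})$ and $(\ref{conservada1})$, the latter understood in its integrated form $(\ref{conservada2})$, together with the bound $(\ref{gronwall})$ for the mild solution $u\in C_B(\mathbb{R};H^1(\mathbb{R}))$.

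The main obstacle is the global smooth theory: verifying that the higher-order energy estimate closes in the absence of any conservation law for the damped equation. The resolution above, namely that the $X_T$ framework of $(\ref{gamma3})$ supplies exactly the $L^1_t L^\infty_x$ control of $u_x$ that the estimate needs, is precisely what upgrades persistence of regularity from local to global. The density passage itself is routine, the only delicate point being the cubic terms, handled as above.
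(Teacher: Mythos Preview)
The paper offers no proof of this remark beyond the sketch contained in its statement; your proposal is a reasonable fleshing-out of that sketch and follows the same route (classical solutions for smooth data, then density). The density passage and the handling of the cubic terms are correct.

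There is, however, one logical circularity to repair. To globalize the smooth solution you invoke Theorem~\ref{teo2} for the finiteness of $\gamma_3(\bar u,T)$ on every $[0,T]$, but Theorem~\ref{teo2} is precisely what Remark~\ref{obs2} is meant to justify: its proof rests on $(\ref{gronwall})$, which in turn requires the identities $(\ref{estI})$ and $(\ref{conservada2})$ that you only declare valid \emph{after} global smoothness has been obtained. The non-circular order is the following. For $\bar u_0\in C_0^\infty(\mathbb{R})$ the local $H^s$ solution is smooth on its maximal interval of existence, and on that interval the integrations by parts leading to $(\ref{estI})$ and $(\ref{conservada2})$ are already legitimate; these produce the $H^1$ bound $(\ref{gronwall})$ on the maximal interval, and the blow-up criterion from the $H^1$ local theory (Theorem~\ref{teo1}, where the lifespan depends only on $\|\,\cdot\,\|_{H^1}$) then forces global $H^1$ existence for this particular smooth datum. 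Only at that point do you know $\bar u\in X_T$ for every $T$ and $\gamma_3(\bar u,T)<\infty$, after which your $H^s$ persistence-of-regularity argument via $(\ref{gamma3})$ closes exactly as you wrote it. With this reordering the argument is complete.
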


\subsection{Exponential Decay for equation $(\ref{kdv1})$.} In this
subsection we are interested in obtaining an exponential decay
rate for the energy norm in $L^2(\mathbb{R})$ related with the
Korteweg-de
Vries equation $(\ref{kdv1})$ with localized damping.\\

\indent In fact, multiplying the first equation in $(\ref{kdv1})$
by $u$ and integrating over $\mathbb{R}\times (0,t)$,
$t\in(0,+\infty)$ (see remark $\ref{obs2}$) we get

\begin{equation}
E_0(t):=\displaystyle\frac{1}{2}\int_{\mathbb{R}}|u(x,t)|^2dx=
-\int_{0}^{t}\int_{\mathbb{R}}a(x)|u(x,t)|^2dxds+\frac{1}{2}||u_0||_{L
^2}^2\leq\frac{1}{2}||u_0||_{L ^2}^2. \label{energy}\end{equation}

Since $\displaystyle\frac{d}{dt}E_0(t)\leq0$, it makes sense to
look for exponential decay rates. From equation $(\ref{kdv1})$ we
have the following energy estimate
\begin{equation}
\int_{0}^{T}E_0(t)dt\leq\frac{\alpha_0^{-1}}{2}
\left[\int_{\mathbb{R}}u^2dx\right]_{0}^{T}+\underbrace{\int_{0}^{T}\int_{x\leq
R_1}u^2dxdt.}_{A} \label{energyest}
\end{equation}

The following result establishes the exponential decay rate for
the solution $u$ obtained in Theorem $\ref{teo2}$.

\begin{teo}
Consider the potential $a(\cdot)$ satisfying hypothesis $(H1)$.
For any $L>0$, there are $c=c(L)>0$ and $\omega=\omega(L)$ such
that
$$E_0(t)\leq ce^{-\omega t},$$
for all $t\geq0$ and for any solution $u$ of $(\ref{kdv1})$
determined in Theorem $\ref{teo2}$, provided that
$||u_0||_{H^1(\mathbb{R})}\leq L$. \label{teo3}
\end{teo}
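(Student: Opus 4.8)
The plan is to combine the standard energy identity with a decay estimate of Komornik–Zuazua type, reducing everything to an absorption of the ``bad'' term $A=\int_0^T\int_{x\le R_1}u^2\,dx\,dt$ appearing in $(\ref{energyest})$. Since $E_0$ is non-increasing, it suffices to prove an estimate of the form $\int_0^T E_0(t)\,dt\le C(L)\,E_0(0)$ with $C(L)$ independent of $T$; the exponential decay $E_0(t)\le c\,e^{-\omega t}$ then follows from the classical semigroup lemma (if $\int_0^T E_0\le C E_0(0)$ for all $T$ and $E_0$ is nonincreasing, then $E_0(t)\le E_0(0)e^{1-t/C}$), applied on successive intervals of length $T$ using the semigroup property together with the continuous dependence in Theorem $\ref{teo2}$. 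So the heart of the matter is controlling $A$.

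First I would handle the part of $A$ over $\{-R_2\le x\le R_1\}$ (the truly uncontrolled zone) and the part over $\{x\le -R_2\}$ (if the second alternative of $(H1)$ holds, this is already absorbed by the damping just as in $(\ref{energyest})$, so I focus on the bounded strip). The standard device is a compactness–uniqueness argument: suppose, for contradiction, that no constant $C(L)$ works; then there is a sequence of solutions $u_n$ with $\|u_{n,0}\|_{H^1}\le L$ and $\int_0^T E_0^{u_n}(t)\,dt > n\int_0^T\int_{x\le R_1}u_n^2\,dx\,dt$. Normalizing by $\lambda_n:=\bigl(\int_0^T\int_{x\le R_1}u_n^2\bigr)^{1/2}$, one shows the normalized sequence is bounded in the energy space and, using the Kato-type smoothing estimate $(\ref{linearest3})$ and the global bound $(\ref{gronwall})$, one extracts a subsequence converging strongly enough (locally in space) to pass to the limit. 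The limit $u$ is a solution of the KdV equation $(\ref{kdv1})$ with $a(x)u\equiv 0$, i.e. $u$ satisfies the free KdV on the region $\{x>R_1\}$ where $a\ge\alpha_0$, hence $u\equiv0$ there, and $\int_0^T\int_{x\le R_1}u^2=0$ by construction of the normalization (or, in the non-trivial-limit case, a contradiction with the weak lower semicontinuity). Then Zhang's unique continuation property \cite{zhang} — applied to the smooth solutions produced by Remark $\ref{obs2}$ and transferred to mild solutions by density — forces $u\equiv 0$ on all of $\mathbb{R}\times(0,T)$, contradicting the normalization $\lambda_n\equiv 1$.

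The main obstacle is making the passage to the limit rigorous at the $L^2$-level: mild solutions only live in $C_B([0,T];H^1)$ with no uniform gain of regularity, so one must exploit the local smoothing estimate $(\ref{linearest3})$ (and the $X_T$-norm control from Theorem $\ref{teo2}$) to get strong local compactness in space-time for $u_n$ and for $\partial_x u_n$, enough to pass to the limit in the nonlinear term $u_nu_{n,x}$; simultaneously one needs equicontinuity in time, obtained from the equation itself by bounding $\partial_t u_n$ in a negative-order space. A secondary technical point is that the unique continuation result is stated for regular solutions, so throughout the argument one works first with the smooth approximations $\bar u_n\in C^\infty$ from Remark $\ref{obs2}$, derives the observability inequality for them with a constant depending only on $L$, and then passes to mild solutions using the continuous dependence of Theorem $\ref{teo2}$. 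Once the observability inequality $\int_0^T E_0(t)\,dt\le C(L)\bigl(E_0(0)-E_0(T)\bigr)$ is established — note that $[\int u^2]_0^T = 2(E_0(T)-E_0(0))\le 0$, so the first term on the right of $(\ref{energyest})$ is already favorable — the exponential decay with $\omega=\omega(L)=\tfrac1{C(L)+1}$ and $c=c(L)$ follows immediately, completing the proof.
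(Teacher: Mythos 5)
Your overall strategy is exactly the paper's: reduce the exponential decay to an observability inequality via $(\ref{energyest})$ and the energy identity, absorb the localized term $A$ by the damping through a compactness--uniqueness contradiction argument, invoke Zhang's unique continuation on the limit equation, and run the whole argument on smooth solutions transferred to mild ones by density and continuous dependence --- this is precisely Lemma $\ref{lema6}$ and the two lines that follow it in the paper. One concrete slip, however: the inequality you negate has the wrong denominator. You posit a sequence with $\int_0^T E_0^{u_n}(t)\,dt> n\int_0^T\int_{x\le R_1}u_n^2\,dx\,dt$; but, as you yourself observe, the boundary term in $(\ref{energyest})$ is nonpositive, so $\int_0^T E_0\,dt\le A$ always holds, and for $n>1$ no such sequence can exist except the trivial one --- negating that inequality therefore proves nothing. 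The quantity that must sit in the denominator is the damping term $\int_0^T\int_{\mathbb{R}}a(x)u_n^2\,dx\,dt=E_0^{u_n}(0)-E_0^{u_n}(T)$, as in $(\ref{blowUCP})$: only then does the contradiction hypothesis force $\int_0^T\int_{\mathbb{R}}a(x)u_n^2\,dx\,dt\to0$, hence $u_n\to 0$ on the set where $a\ge\alpha_0$ and $a(x)u\equiv0$ in the limit, which are exactly the facts your subsequent steps use. With that correction your argument coincides with the paper's, except at one further point: where you dispose of the trivial-limit case by a vague appeal to ``weak lower semicontinuity,'' the paper performs a second normalization $v_k=u_k/\|u_k\|_{L^2(0,T;H^1)}$, passes to a \emph{linear} Airy equation in the limit (the nonlinearity dies because $u_k\to0$), and closes with Holmgren's theorem; some such renormalization is genuinely needed, since when the weak limit vanishes the unnormalized sequence yields no contradiction.
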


\begin{proof} Before  establishing the exponential decay
related to equation $(\ref{kdv1})$, we need to obtain a preliminar
estimate for the integral  $A$ (a similar bound can be deduced
considering the integral over $\{x\in \mathbb{R};x\geq -R_2$\}).
In fact, we have the following lemma, where $T_0$ is a positive
constant and where we consider that the initial data lies in a
bounded set of $H^1(\mathbb{R})$.

\begin{lema}
Let $u\in L_{loc}^{\infty}(0,+\infty;H^1(\mathbb{R}))\cap X_T$ be
the mild solution associated to the Korteweg-de Vries equation
$(\ref{kdv1})$, obtained from Theorem $\ref{teo2}$. Then, we have
that for all $T>T_0$ there exists a positive constant
$c_5=c_5(T_0,E_0(0))$ such that if $u$ is the mild solution of
$(\ref{kdv1})$ with initial data $u_0 \in H^1(\mathbb{R})$, the
following inequality holds
\begin{equation}
\int_{0}^{T}\int_{x\leq R_1}u^2dxdt\leq
c_5\int_{0}^{T}\int_{\mathbb{R}}a(x)u^2dxdt,
\label{estUCP}\end{equation} provided that $u_0$ belongs to a
bounded set of $H^1(\mathbb{R})$.
 \label{lema6}\end{lema}

\begin{proof} Consider $P_{R_1}:=\{x\in\mathbb{R};\ x\leq R_1,\ R_1>0\}$.
 First of all we note that according to the continuous dependence of the initial data and arguments
of density it is sufficient to establish the Lemma for smooth
solutions (for instance, those ones which belong to
$C([0,T],H^s(\mathbb{R}))$), $s>0$ large enough, related to
problem
$(\ref{kdv1})$ (see Remark $\ref{obs2}$).\\
\indent We argue by contradiction. Let us suppose that
$(\ref{estUCP})$ is not verified and let
$\{u_k(0)\}_{k\in\mathbb{N}}=\{u_k^0\}_{k\in\mathbb{N}}$ be a
bounded sequence of initial data in $H^1-$norm, where the
corresponding solutions $\{u_k\}_{k\in \mathbb{N}}$ of
$(\ref{kdv1})$ verifies
\begin{equation}\label{blowUCP}
\lim_{k \rightarrow +\infty}\displaystyle\frac{\int_0^T
||u_k(t)||_{L^2(P_{R_1})}^2 dt}{\int_0^T\int_{\mathbb{R}}\left(
a(x)\,u_k^2\right)dx\,dt}=+\infty,
\end{equation}
that is,
\begin{equation}\label{inverseUCP}
\displaystyle\lim_{k \rightarrow
+\infty}\frac{\int_0^T\int_{\mathbb{R}}\left(
a(x)\,u_k^2\right)dx\,dt}{\int_0^T ||u_k(t)||_{L^2(P_{R_1})}^2 dt}=0.
\end{equation}

Since,
$$E_{0}^k(t)\leq E_{0}^k(0)\leq L,$$
we obtain a subsequence of $\{u_k\}_{k\in\mathbb{N}}$, still denoted
by $\{u_k\}_{k\in\mathbb{N}}$ from now on, which verifies the
convergence:
\begin{equation}
u_k\rightharpoonup u\ \ \ \mbox{ weakly in }\ \ \ \
L^{\infty}([0,T];L^2(\mathbb{R}))\label{convergence1}\end{equation}

From the boundedness of $\{u_k\}_{k\in\mathbb{N}}$ in
$L^{\infty}(0,T;L^2(\mathbb{R}))$ and $(\ref{blowUCP})$ we deduce

\begin{equation}
\displaystyle\lim_{k\rightarrow+\infty}\int_{0}^{T}\int_{\mathbb{R}}a(x)u_{k}^2dxdt=0,
\label{limit2}\end{equation} consequently, from the hypothesis
made on $a(x)$ we have
\begin{equation}
\displaystyle\lim_{k\rightarrow+\infty}\int_{0}^{T}\int_{\mathbb{R}\backslash
P_{R_1}}u_{k}^2dxdt=0. \label{limit3}\end{equation}

Let us consider $R\in\mathbb{R}$ such that $R<R_1$. Employing
$(\ref{gronwall})$ and compactness results we obtain

\begin{equation}
u_k\rightarrow u_{R} \ \ \ \ \mbox{strongly in}\ \ \ \
L^{2}([0,T];L^2([R,R_1]));\ \ \ \forall\ R<R_1.
\label{limit4}\end{equation}

Therefore, from $(\ref{convergence1})$ and $(\ref{limit4})$ we
obtain, from the uniqueness of weak limit, that $u_R=u$ in
$L^{2}([0,T];L^2([R,R_1]))$ for all $R<R_1$ and then, we infer,

\begin{equation}
u_k\rightarrow u \ \ \ \ \mbox{strongly in}\ \ \ \
L^{2}([0,T];L^2([R,R_1]));\ \ \ \forall\ R<R_1,
\label{limit4'}\end{equation}

Having in mind $(\ref{limit3})$ and $(\ref{limit4'})$, we conclude
that

\begin{equation}
u_k\rightarrow \tilde{u} \ \ \ \ \mbox{strongly in}\ \ \ \
L^{2}([0,T];L^2(R,+\infty)); \ \ \ \forall\
R<R_1,\label{limit5}\end{equation}
 where

\begin{equation}\label{utilde}
\tilde{u}=\left\{
\begin{array}{lll}
{u,\ \ \ \mbox{a.e. in}\ \ \ \ [R,R_1],~\forall R< R_1}\\
{0,\ \ \ \mbox{a.e. in}\ \ \ \ \mathbb{R}\backslash P_{R_1}}.
\end{array}
\right.
\end{equation}

In addition, from $(\ref{estH1})$ it is possible to conclude that,
\begin{equation}
u_{k,x}\rightharpoonup u_x\ \ \ \mbox{ weakly in }\ \ \ \
L^{2}([0,T];L^2(\mathbb{R}))\label{convergence2}\end{equation}

At this point we will divide our proof into two cases, namely: when
$u\ne 0$ and $u=0$.

\medskip

Case (I): $u \ne 0.$

In this case, passing to the limit in the equation
\begin{equation}
u_{k,t} + u_k u_{k,x} + u_{k,xxx}+ a(x) u_k=0~\hbox{ a. e. in  }
(R,+\infty) \times (0,T) ~(\forall R< R_1),
\end{equation}
when $k
\rightarrow +\infty$, we deduce that
\begin{equation}\label{kdvlimit}
\displaystyle\left\{\begin{array}{lll} u_{t}+uu_{x}+u_{xxx}=0,\ \ \
\ \ \ \ \ \ \ \
\mbox{in}\ \ C([0,T];L^2(R,+\infty)),\\\\
u=0,\ \ \ \ \ \ \ \ \ \ \  \  \ \ \ \ \ \ \ \ \ \ \ \ \ \ \ \ \ \
\mbox{a.e. in}\ \mathbb{R}\backslash P_{2R_1},\end{array}
\right.
\end{equation}
where $u\in C([0,T],H^s(\mathbb{R}))$, $s>0$ large enough. From
Unique Continuation due to Zhang  \cite{zhang}, we conclude that
$u\equiv0$ a.e. in $P_{2R_1}$. Being $u\equiv0$ a.e. in
$\mathbb{R}\backslash
P_{R_1}$ we get $u\equiv0$ a.e. in $\mathbb{R}$, which is a contradiction.\\

Case (II): $u=0.$

Define,
\begin{equation}
\nu_k=||u_k||_{L^2([0,T];H^1(\mathbb{R}))}. \label{normalizeduk}
\end{equation}
Then, for $v_k=\displaystyle\frac{u_k}{\nu_k}$ we have

\begin{equation}
||v_k||_{L^2([0,T];H^1(\mathbb{R}))}=1, \ \ \ \forall\
k\in\mathbb{N}, \label{normavk}\end{equation} which implies that
there exists $v\in L^2([0,T];H^1(\mathbb{R}))$
\begin{eqnarray}\label{conv vk}
v_k \rightharpoonup v \hbox{ weakly in
}L^2([0,T];L^2(\mathbb{R}))~ \hbox{and}~
 v_{k,x} \rightharpoonup v_x
\hbox{ weakly in }L^2([0,T];L^2(\mathbb{R})).
\end{eqnarray}

\indent Moreover, $v_k$ satisfies the equation
\begin{equation}\label{vksequence}
\displaystyle v_{k,t}+u_k v_{k,x}+v_{k,xxx}+a(x)v_k=0,\ \ \ \ \ \
\ \mbox{in}\ \
\mathcal{D}'\left((R,+\infty)\times (0,T)\right)~(\forall R<
R_1).
\end{equation}
\\
\indent From $(\ref{blowUCP})$,
\begin{equation}\label{blowUCPvk}
\lim_{k \rightarrow +\infty}\displaystyle\frac{\int_0^T
||v_k(t)||_{L^2(P_{R_1})}^2 dt}{\int_0^T\int_{\mathbb{R}}\left(
a(x)\,v_k^2\right)dx\,dt}=+\infty.
\end{equation}

On the other hand, being $||v_k||_{L^2(0,T;L^2(P_{R_1}))}^2 \leq
||v_k||_{L^2(0,T;L^2(\mathbb{R}))}^2\leq 1$ for all
$k\in\mathbb{R}$ we obtain from $(\ref{blowUCPvk})$ that
\begin{equation}\label{normL2vk}
\displaystyle\lim_{k\rightarrow+\infty}\int_0^T\int_{\mathbb{R}}\left(
a(x)\,v_k^2\right)dx\,dt=0.
\end{equation}

Since $a(x)\geq\alpha_0>0$ for $x\in\{x\in\mathbb{R};\ x\geq R_1,\
R_1>0\}$ we obtain from $(\ref{normL2vk})$,
\begin{equation}\label{normL2vk1}
\displaystyle\lim_{k\rightarrow+\infty}\int_0^T\int_{\mathbb{R}\backslash
P_{R_1}}v_k^2dx\,dt=0.
\end{equation}

Thus,
\begin{equation}
v_k\rightarrow0\ \ \mbox{in}\ \ L^2([0,T];L^2(\mathbb{R}\backslash
P_{R_1})). \label{limit6}
\end{equation}

 Taking $(\ref{limit5})$,
$(\ref{utilde})$ (observe that $u=0$), $(\ref{vksequence})$, $(\ref{conv vk})$, $(\ref{normL2vk})$ and $(\ref{limit6})$ into account, we obtain,
\begin{equation}\label{kdvlimitvk}
\displaystyle\left\{\begin{array}{lll} v_{t}+v_{xxx}=0,\ \ \ \ \ \ \
\ \ \ \
\mbox{in}\ \ \mathcal{D}'((R,+\infty)\times(0,T)),\\\\
v=0,\ \ \ \ \ \ \ \ \ \ \ \ \  \ \ \ \ \ \ \ \ \mbox{a.e. in}\
\mathbb{R}\backslash {P_{2R_1}}.\end{array}\right.
\end{equation}

Therefore, from Holmgreen's Theorem  we conclude that $v\equiv 0$
in $(R,+\infty) \times [0,T]$, for all $R < R_1$, that is, $v\equiv 0$ in $\mathbb{R}$ which contradicts $(\ref{normavk})$.\\

\end{proof}

Using the result established in Lemma $\ref{lema6}$, we are able
to prove the result in Theorem $\ref{teo3}$. Indeed, taking
$(\ref{energyest})$ and $(\ref{estUCP})$ into account and making
use of the identity of the energy
\begin{eqnarray*}
E_0(T)- E_0(0) =-\int_0^T \int_{\mathbb{R}}a(x) u^2\,dx\,dt,
\end{eqnarray*}
we deduce that
\begin{eqnarray}
\int_0^T E_0(t)\,dt \leq C\,E_0(0),~\hbox{ for all }T > T_0,
\end{eqnarray}
for some positive constant $C>0$, which implies the exponential
stability.

\end{proof}
\section{Non-decay in $H^1-$norm.}
\indent In this section let us consider a particular case, when
equation $(\ref{kdv1})$ is fully damped, that is,

\begin{equation}\label{kdv3}
\displaystyle\left\{\begin{array}{lll} u_t+uu_x+u_{xxx}+\mu u=0,\ \
\ \ \ \ \
(x,t)\in\mathbb{R}\times[0,+\infty),\\\\
u(x,0)=u_0(x),\ \ \ \ \ \ \ \ \ \ \ \ \ \ \ \ \ \ \ \ \ \ \ \ \
x\in\mathbb{R},\end{array}\right.
\end{equation}
where $\mu>0$. It is straightforward to conclude that for $u_0\in
H^1(\mathbb{R})$ equation \ref{kdv3} possesses a unique solution
$u\in C([0,T];H^1(\mathbb{R}))$,  for all $T>0$. Moreover, the
solution $u$ satisfies a similar identity (see
$(\ref{conservada1})$), for all $t\geq0$, in this case given by,

\begin{equation}
\displaystyle\frac{d}{dt}\underbrace{\left\{||u||_{H^1}^2-\frac{1}{3}\int_{\mathbb{R}}u^3\right\}}_{E(t)}=
-\mu\underbrace{\left\{2||u||_{H^1}^2-\int_{\mathbb{R}}u^3\right\}}_{K(t)}
\label{conservada3}\end{equation}

Assuming that $K(t)\geq0$ occurs for all $t\geq0$ and considering
a $C^1-$functional $J:H^1(\mathbb{R})\rightarrow\mathbb{R}$
defined by
\begin{equation}{\label{functJu}}
J(w)=\displaystyle\frac{1}{2}||w||_{H^1}^2-\frac{1}{3}||w||_{L^3}^3;\
\ \ w\in H^1(\mathbb{R});
\end{equation}
we note that
\begin{equation}\label{EtJu}
E(t)\geq J(u).
\end{equation}

Moreover, since $K(t)\geq0$ for all $t\geq0$ we conclude from
$(\ref{conservada3})$
$$E'(t)\leq0,\ \ \ \forall\ t\in[0,+\infty),$$
and then
\begin{equation}\label{decreaEt}
E(t)\leq E(0), \ \ \ \ \forall\ t\in[0,+\infty).
\end{equation}

Let $B_1>0$ be the sharp Sobolev constant verifying ,
$$||u||_{L^3}\leq B_1||u||_{H^1},\ \ \ \ \forall\ u\in
H^1(\mathbb{R});$$ then
$$\displaystyle\frac{\frac{1}{3}||u||_{L^3}^3}{||u||_{H^1}^3}\leq \frac{B_1^3}{3},\ \ \ \ \forall\ u\in
H^1(\mathbb{R}).$$

The above inequality implies that

\begin{equation}
0<k_0=\displaystyle\sup_{v\in
H^1(\mathbb{R})\backslash\{0\}}\left(\frac{\frac{1}{3}||u||_{L^3}^3}{||u||_{H^1}^3}\right)\leq
\frac{B_1^3}{3}.\label{k0}\end{equation}

The functional $J$ defined in $(\ref{functJu})$ enable us to
define a function $f:[0,+\infty)\rightarrow\mathbb{R}$ given by
$f(\xi)=\displaystyle\frac{1}{2}\xi^2-k_0\xi^3$ (see figure
below), where $k_0$ was defined in $(\ref{k0})$. We note that
$f(||u||_{H^1})\leq J(u)$.\\
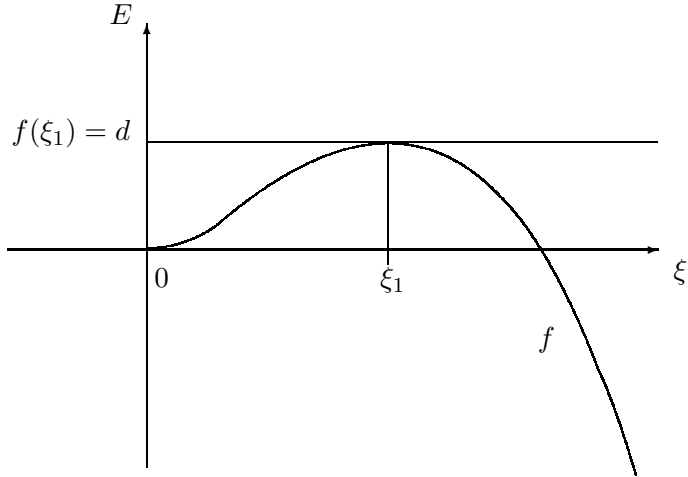
\begin{figure}[h]
\begin{picture}(8,7)(-0.5,-3)
\put(-1.85,0){\vector(1,0){8.65}}
\put(0,-2.9){\vector(0,1){5.9}}
\multiput(3.2,-0.2)(1,0){1}{\line(0,1){0.2}}
\put(0.1,-0.5){$0$}
\put(3.1,-0.5){$\xi_1$} \put(7.0,-0.4){$\xi$}
\put(-1.8,1.45){$f(\xi_1)= d$} \put(-0.5,3.0){$E$}
\put(0,1.432){\line(1,0){6.78}} \put(3.2,0){\line(0,1){1.44}}
\qbezier(0,0.01)(0.5,0.03)(0.9,0.3)
\qbezier(0.9,0.3)(3.2,2.3)(4.7,0.7) \qbezier(4.7,0.7)(5.0,0.4)(5.3,
-0.1) \qbezier(5.3,-0.1)(5.7,-0.8)(6.0,-1.6)
\qbezier(6.0,-1.6)(6.2,-2.0)(6.5,-3.0)

\put(5.2,-1.3){$f$}
\end{picture}
\medskip
\caption{ Graphic of function $f(\xi)= \frac12 \xi^2
-\frac{1}{k_0}\xi^3$ . }
\end{figure}

Since $f$ is a differentiable function in $\mathbb{R}_{+}$ their
critical points occur in $\xi_0=0$ and
$\xi_1=\displaystyle\frac{1}{3k_0}$. The last point is, in fact,
the absolute maximum of the function $f$. Moreover, we define
$d:=f(\xi_1)=\displaystyle\frac{1}{6}\xi_1^2$.

Next, from $(\ref{EtJu})$, $(\ref{decreaEt})$ and $(\ref{k0})$ we
obtain the estimate,
\begin{equation}
E(t)\geq J(u)\geq f(||u||_{H^1}). \label{EtJuf}\end{equation}

We have the following result due to Vitillaro (see
\cite{vitillaro}),

\begin{lema}\label{Vitilaro}
Let $u$ be a solution that exists on the interval $[0,T]$, for all
$T>0$, according to Theorem $\ref{teo2}$ for $a(x)\equiv\mu>0$.
Then, if $||u_0||_{H^1}>\xi_1$ and $E(0)<d$, then
$$||u(t)||_{H^1}\geq\xi_2,$$
for some $\xi_2>\xi_1$ and all $t\geq0$. Moreover,
$$||u||_{L^3}\geq
k_0^{1/3}\xi_2.$$
\end{lema}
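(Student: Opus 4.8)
The plan is to argue exactly in the spirit of the potential well method of Vitillaro \cite{vitillaro}, Cavalcanti et al.\ \cite{Cavalcanti}, \cite{Cavalcanti2}, \cite{Cavalcanti3}, adapted to the functional $J$ and the auxiliary one-dimensional function $f(\xi)=\frac12\xi^2-k_0\xi^3$. First I would record the structure of $f$ on $[0,+\infty)$ already set up in the excerpt: $f$ increases on $[0,\xi_1]$, attains its absolute maximum $d=f(\xi_1)=\frac16\xi_1^2$ at $\xi_1=\frac{1}{3k_0}$, and strictly decreases afterwards, eventually going to $-\infty$. In particular, since $E(0)<d$, there is a unique $\xi_2>\xi_1$ with $f(\xi_2)=E(0)$ (continuity and strict monotonicity of $f$ on $[\xi_1,+\infty)$ give existence and uniqueness), and by the same monotonicity, on the interval $(\xi_1,\xi_2)$ one has $f(\xi)>E(0)$.

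Next I would run the standard continuity/connectedness argument. Set $\phi(t):=\|u(t)\|_{H^1}$; by Theorem \ref{teo2} the solution lies in $C([0,T];H^1(\mathbb{R}))$ for every $T>0$, so $\phi$ is continuous on $[0,+\infty)$. The hypothesis $\|u_0\|_{H^1}>\xi_1$ means $\phi(0)>\xi_1$; I claim in fact $\phi(0)\geq\xi_2$. Indeed, from $(\ref{EtJuf})$, $E(0)\geq f(\phi(0))$; if we had $\xi_1<\phi(0)<\xi_2$ this would force $E(0)\geq f(\phi(0))>E(0)$, a contradiction, so $\phi(0)\geq\xi_2$. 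Now suppose, for contradiction, that $\phi(t_\ast)<\xi_2$ for some $t_\ast>0$. Since $\phi(0)\geq\xi_2>\xi_1$ and $\phi$ is continuous, there is a first time $t^\flat\in(0,t_\ast]$ at which $\phi(t^\flat)=\xi_2$? Here one must be slightly careful: the value $\xi_2$ need not be the first value crossed. The cleaner route is to use the intermediate value theorem on the interval $[0,t_\ast]$ to produce $t^\sharp\in(0,t_\ast)$ with $\xi_1<\phi(t^\sharp)<\xi_2$ (any value strictly between $\xi_1$ and $\min(\xi_2,\phi(t_\ast))$ that is also below $\phi(0)$ is attained). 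For that $t^\sharp$, $(\ref{EtJuf})$ and $(\ref{decreaEt})$ give
\begin{equation*}
E(0)\geq E(t^\sharp)\geq J(u(t^\sharp))\geq f(\phi(t^\sharp))>E(0),
\end{equation*}
a contradiction. Hence $\phi(t)\geq\xi_2$ for all $t\geq0$, which is the first assertion.

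For the second assertion, I would simply combine the lower bound just obtained with the definition of $k_0$ in $(\ref{k0})$ and the decay of $E$. From $(\ref{decreaEt})$ and $E(0)<d$ we get $E(t)<d$ for all $t$, while $E(t)\geq J(u(t))=\frac12\|u(t)\|_{H^1}^2-\frac13\|u(t)\|_{L^3}^3$; one then needs to turn the bound $\|u(t)\|_{H^1}\geq\xi_2$ into a bound $\|u(t)\|_{L^3}\geq k_0^{1/3}\xi_2$. The route I expect the authors intend is the following: on the set where $\|w\|_{H^1}\geq\xi_1$ and $J(w)\leq E(0)<d$, the "stable/unstable set" dichotomy forces $\frac13\|w\|_{L^3}^3$ to be at least $k_0\|w\|_{H^1}^3$ (this is exactly the content of being on the unstable side of the potential well: the supremum $k_0$ in $(\ref{k0})$ is attained in the limit along the relevant family, and the Nehari-type functional $K$ controls the sign), whence $\|w\|_{L^3}^3\geq 3k_0\xi_2^3$ is not quite what is claimed — rather $\|u\|_{L^3}\geq k_0^{1/3}\xi_2$ follows from $\frac13\|u\|_{L^3}^3\geq k_0\|u\|_{H^1}^3\geq k_0\xi_2^3$ and taking cube roots. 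I would therefore prove the intermediate inequality $\frac13\|u(t)\|_{L^3}^3\geq k_0\|u(t)\|_{H^1}^3$ for all $t\geq0$ by the same first-time argument: if it ever failed, then $J(u(t))=\frac12\|u(t)\|_{H^1}^2-\frac13\|u(t)\|_{L^3}^3>\frac12\xi_2^2-k_0\xi_2^2\cdot(\text{something})$, and one reaches $J(u(t))>f(\xi_2)=E(0)\geq E(t)$, contradicting $(\ref{EtJuf})$.

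The main obstacle, and the step I would be most careful about, is precisely this last one: cleanly establishing that the solution stays on the "unstable" side of the well, i.e.\ that $K(t)\le 0$ (equivalently $\frac13\|u(t)\|_{L^3}^3\ge k_0\|u(t)\|_{H^1}^3$) is preserved, and extracting the sharp constant $k_0$ rather than a weaker one. Everything else — the monotonicity of $E$, the geometry of $f$, the intermediate-value contradiction — is routine once the potential-well invariance is in place; but the assumption "$K(t)\geq0$ for all $t\geq0$" made before $(\ref{functJu})$ seems to be in tension with needing $K(t)\leq0$, so one must track carefully which region of phase space the hypotheses $\|u_0\|_{H^1}>\xi_1$, $E(0)<d$ actually pin the trajectory into, and show the sign of $K$ along it is constant by a continuity argument identical in structure to the one above. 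I would expect to devote the bulk of the written proof to that invariance, and treat the rest as immediate.
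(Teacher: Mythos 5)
Your treatment of the first assertion is correct and is essentially the paper's own argument: the construction of $\xi_2>\xi_1$ with $f(\xi_2)=E(0)$, the deduction $||u_0||_{H^1}\geq\xi_2$, and the continuity/intermediate-value contradiction via $E(0)\geq E(t)\geq f(||u(t)||_{H^1})>f(\xi_2)=E(0)$ whenever $\xi_1<||u(t)||_{H^1}<\xi_2$ are exactly the paper's steps (your single IVT formulation merely merges the paper's cases (i) and (ii)).

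The second assertion is where your proposal has a genuine gap. The intermediate inequality $\frac13||u(t)||_{L^3}^3\geq k_0||u(t)||_{H^1}^3$ that you announce as the ``main obstacle'' is both unnecessary and not obtainable by the route you sketch. The energy inequality gives $\frac13||u(t)||_{L^3}^3\geq\frac12||u(t)||_{H^1}^2-E(0)$; to upgrade the right-hand side to $k_0||u(t)||_{H^1}^3$ you would need $f(||u(t)||_{H^1})\geq f(\xi_2)$, which is false whenever $||u(t)||_{H^1}>\xi_2$ because $f$ is strictly decreasing on $(\xi_1,+\infty)$ --- so the ``first-time'' contradiction you propose (reaching $J(u(t))>f(\xi_2)=E(0)$) never materializes on the unstable side, and no Nehari-type sign invariance of $K$ is needed or used (the hypothesis $K(t)\geq 0$ serves only to give $E'\leq 0$). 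The correct argument, which is what the paper's terse closing line ``Since $E(t)\leq E(0)$ we get $||u||_{L^3}\geq k_0^{1/3}\xi_2$'' encodes, is a one-line computation from what you have already established: $E(0)\geq E(t)\geq J(u(t))=\frac12||u(t)||_{H^1}^2-\frac13||u(t)||_{L^3}^3$ together with $||u(t)||_{H^1}\geq\xi_2$ yields $\frac13||u(t)||_{L^3}^3\geq\frac12\xi_2^2-E(0)=\frac12\xi_2^2-f(\xi_2)=k_0\xi_2^3$, whence $||u(t)||_{L^3}\geq(3k_0)^{1/3}\xi_2\geq k_0^{1/3}\xi_2$. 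You should replace your entire discussion of well invariance by this computation.
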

\begin{proof} Indeed, since $f$ is strictly increasing for $0<\xi<\xi_1$
and strictly decreasing for $\xi>\xi_1$, $f(\xi_1)=d$,
$f(\xi)\rightarrow-\infty$ when $\xi\rightarrow+\infty$ and
$d>E(0)>f(||u_0||_{H^1})\geq f(0)=0$, there are $\xi_2'<\xi_1<\xi_2$
such that,

\begin{equation}\label{E0fxi}
f(\xi_2)=f(\xi_2')=E(0).
\end{equation}
\indent On the other hand, being $E(t)$ non-increasing, we have
\begin{equation}\label{EtE0}
E(t)\leq E(0),\ \ \ \ \forall\ t\geq0.
\end{equation}

From $(\ref{EtJuf})$ and $(\ref{E0fxi})$ we deduce

\begin{equation}\label{E0fxi2}
f(||u_0||_{H^1})\leq E(0)=f(\xi_2).
\end{equation}

Since $||u_0||_{H^1},\xi_2\in(\xi_1,+\infty)$ we conclude from
$(\ref{E0fxi2})$ that
\begin{equation}
||u_0||_{H^1}\geq\xi_2. \label{u0xi2}
\end{equation}

The next step is to prove that
\begin{equation}\label{resulteo}
||u(t)||_{H^1}\geq\xi_2,\ \ \ \forall t\geq0.
\end{equation}

In fact, we argue by contradiction. If $(\ref{resulteo})$ does not
occur, then there is $t^{\star}\in(0,+\infty)$ such that,
\begin{equation}\label{resulteocont}
||u(t^{\star})||_{H^1}\leq\xi_2.
\end{equation}

It is necessary to consider two cases:\\
(i) If $||u(t^{\star})||_{H^1}>\xi_1$, then from $(\ref{EtJuf})$,
$(\ref{E0fxi2})$ and $(\ref{resulteocont})$ we obtain,
$$E(t^{\star})\geq f(||u(t^{\star})||_{H^1})>f(\xi_2)=E(0),$$
which contradicts $(\ref{decreaEt})$.

(ii) If we consider $||u(t^{\star})||_{H^1}\leq\xi_1$, observing
$(\ref{u0xi2})$, we guarantee the existence of $\bar{\xi}$ which
satisfies
\begin{equation}\label{contiuu0}
||u(t^{\star})||_{H^1}\leq\xi_1<\bar{\xi}<\xi_2\leq||u_0||_{H^1}.
\end{equation}

Consequently, from the continuity of $||u(\cdot)||_{H^1}$ as a
function of $t\geq0$, there is a $\bar{t}\in(0,t^{\star})$ verifying
$$||u(\bar{t})||_{H^1}=\bar{\xi}.$$

From the last equality, and $(\ref{EtJuf})$, $(\ref{E0fxi2})$ and
$(\ref{contiuu0})$,
$$E(\bar{t})\geq
f(||u(\bar{t})||_{H^1})=f(\bar{\xi})>f(\xi_1)=E(0),$$ which
contradicts $(\ref{decreaEt})$. Since $E(t)\leq E(0)$ we get
$$||u||_{L^3}\geq k_0^{1/3}\xi_2.$$

This proves the Lemma.

\end{proof}

\begin{obs}\label{obs 2.3}
From Lemma $\ref{Vitilaro}$ and Sobolev's embedding
$H^1(\mathbb{R})\hookrightarrow L^3(\mathbb{R})$ we guarantee that

\begin{equation}\label{sobolevk0}
||u||_{H^1}\geq C||u||_{L^3}\geq k_0^{1/3}\xi_2,
\end{equation}
for some positive constant $C>0$ and initial data $u_0$ satisfying
$||u_0||_{H^1}>\xi_1=\displaystyle\frac{1}{3k_0}$. The inequality
$(\ref{sobolevk0})$ suggests that the $H^1-$norm of the solution
$u$ related to the fully damped Korteweg -de Vries equation
$(\ref{kdv3})$, can not have a decay property even being the
energy $E(t)$ non-increasing.
\end{obs}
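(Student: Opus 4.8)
The plan is to read (\ref{sobolevk0}) as a chain that simply splices together two ingredients already in hand: the continuous Sobolev embedding $H^1(\mathbb{R})\hookrightarrow L^3(\mathbb{R})$ and the uniform-in-time lower bound furnished by Lemma \ref{Vitilaro}. The embedding provides a constant — precisely the sharp constant $B_1$ introduced just before (\ref{k0}) — with $||w||_{L^3}\leq B_1||w||_{H^1}$ for every $w\in H^1(\mathbb{R})$. Setting $C=B_1^{-1}$ and applying this to $w=u(t)$ at each fixed $t\geq0$ rearranges exactly to the first inequality $||u||_{H^1}\geq C||u||_{L^3}$. So the first $\geq$ in (\ref{sobolevk0}) is nothing more than the embedding written in reverse, holding pointwise in time.

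For the second $\geq$ I would invoke Lemma \ref{Vitilaro} directly. Under its hypotheses — namely $||u_0||_{H^1}>\xi_1=\frac{1}{3k_0}$ together with $E(0)<d$ — the lemma delivers the uniform bounds $||u(t)||_{H^1}\geq\xi_2$ and $||u(t)||_{L^3}\geq k_0^{1/3}\xi_2$, valid for all $t\geq0$. Substituting the latter into the inequality from the embedding yields $||u(t)||_{H^1}\geq C\,||u(t)||_{L^3}\geq C\,k_0^{1/3}\xi_2$ for every $t\geq0$, a quantity that is strictly positive and independent of $t$.

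The only genuine point requiring care is the bookkeeping of constants between the two steps: the factor $C=B_1^{-1}$ from the embedding and the factor $k_0^{1/3}$ from the definition (\ref{k0}) must be tracked together, so that the right-hand side of (\ref{sobolevk0}) is understood as the combined positive number $C\,k_0^{1/3}\xi_2$ (absorbing harmless numerical factors into a single $C>0$). Since $k_0>0$, being a supremum of strictly positive quantities, and $\xi_2>\xi_1>0$ by Lemma \ref{Vitilaro}, this product is positive. I would stress that the precise value of the constant is immaterial here; what matters is its positivity and, above all, its independence of $t$.

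Finally I would draw the stated conclusion. Because $||u(t)||_{H^1}$ is bounded below by a fixed positive constant for all $t\geq0$, it cannot tend to $0$ as $t\to+\infty$; in particular no decay of the $H^1$-norm can occur for such initial data, even though the energy $E(t)$ is non-increasing by (\ref{decreaEt}). This contrast is exactly what the remark records. The main obstacle is conceptual rather than computational: one must be certain that the lower bound from Lemma \ref{Vitilaro} is \emph{uniform} in time — which it is, having been derived from the monotonicity of $E$ — so that the resulting bound on $||u(t)||_{H^1}$ persists for all $t\geq0$ rather than only on a finite interval.
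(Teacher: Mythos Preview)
Your proposal is correct and follows exactly the approach indicated in the paper: the remark has no separate proof beyond its own statement, which simply records that the chain (\ref{sobolevk0}) is obtained by combining the Sobolev embedding (giving the first inequality with $C=B_1^{-1}$) and the uniform-in-time lower bound $||u(t)||_{L^3}\geq k_0^{1/3}\xi_2$ from Lemma~\ref{Vitilaro} (giving the second). You are also right to flag the bookkeeping of constants and to note that the hypothesis $E(0)<d$ from Lemma~\ref{Vitilaro} is implicitly required, even though the remark only explicitly mentions $||u_0||_{H^1}>\xi_1$.
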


\medskip

{\bf Acknowledgements.} The authors would like to express their gratitude to the anonymous referees for giving constructive suggestions which allow to improve considerably the initial version of the present manuscript.


\begin{thebibliography}{99}

\bibitem{Alabau} F. Alabau-Boussouira, Convexity and weighted integral
inequalities for energy decay rates of nonlinear dissipative
hyperbolic systems, {\em Appl. Math. Optim.} 51(1), (2005),
61-105.


\bibitem{albert1}  J.P. Albert, \textit{Positivity properties and stability of solitary-wave
solutions of model eqautions for long waves}, Comm PDE, 17 (1992)
pp. 1--22.

\bibitem{natali} J. Angulo, and F. Natali, \textit{Positivity Properties of the Fourier
Transform and the Stability of Periodic Travelling-Wave Solutions},
SIAM J. Math. Anal., 40 (2008) pp. 1123-1151.

\bibitem{angulo} J. Angulo, J.L. Bona, M. and Scialom, \textit{Stability of cnoidal waves},
Advances in Differential Equations 11 pp. 1321--1374 (2006).

\bibitem{benjamin1} T.B. Benjamin, \textit{The stability of solitary waves}, Proc. Roy. Soc.
London Ser. A 338 (1972), pp. 153--183.

\bibitem{Bisognin} E. Bisognin, V. Bisognin, G. Perla Menzala, \textit{Exponential stabilization of a coupled system
of Korteweg-de Vries equations with localized damping}. Adv.
Differential Equations 8 (2003), no. 4, pp. 443--469.

\bibitem{bona1}  J.L. Bona, \textit{On the stability theory of solitary waves}, Proc Roy. Soc.
London Ser. A 344 (1975), pp. 363--374.

\bibitem{Cavalcanti} M.M. Cavalcanti, V.N. Domingos Cavalcanti and P. Martinez,
\textit{Existence and decay rate estimates for the wave equation
with nonlinear boundary damping and source term. J. Differential
Equations} 203 (2004), no. 1, pp. 119--158.

\bibitem{Cavalcanti2} M.M. Cavalcanti, V.N. Domingos Cavalcanti and I. Lasiecka, \textit{Well-posedness and optimal decay
rates for the wave equation with nonlinear boundary damping---source
interaction}. J. Differential Equations 236 (2007), no. 2, pp.
407--459.

\bibitem{Cavalcanti3} C. Alves and M. M. Cavalcanti. On existence, uniform decay rates and blow up for solutions of the
2-D wave equation with exponential source. {\em Calculus of
Variations on PDE} 34 (2009), no. 3, 377--411.

\bibitem{Gjidaglia} J.M. Ghidaglia, \textit{A note on the strong convergence towards attractors of damped forced KdV
equations}. J. Differential Equations 110 (1994), no. 2, pp.
356--359.

\bibitem{Felipe} F. Linares and A. Pazoto, \textit{On the exponential decay of the
critical generalized Korteweg-de Vries equation with localized
damping}. Proc. Amer. Math. Soc. 135 (2007), no. 5,  pp. 1515--1522.

\bibitem{grimshaw} R. Grimshaw, and H. Mitsudera, \textit{Slowly varying solitary wave
solutions of the pertubed Korteweg-de Vries equation revisted},
Stud. Appl. Math. 90 (1993), no. 1, pp. 75--86

\bibitem{lions1}  J.L. Lions, \textit{Quelques m\'ethodes de re\'esolution des probl\`emes
aux limites non lin\'eaires}, Dunod-Paris (1969).
\bibitem{KPV93} C.E. Kenig, G. Ponce, and L. Vega, \textit{Well-posedness and
scattering results
for the generalized Korteweg-de Vries equation via the contraction
principle}, Comm. Pure Appl. Math. 46 (1993), pp. 527--560

\bibitem{KPV91}  C.E. Kenig, G. Ponce, and L. Vega, \textit{Well-posedness of the
initial value problem for the Korteweg-de Vries equation}, J. Amer.
Math. Soc. 4 (1991), no 2, pp. 323--347.

\bibitem{Komornik} V. Komornik,  \textit{Exact Controllability and Stabilization.
The Multiplier Method},  Mason-John Wiley, Paris, 1994.

\bibitem{Lasiecka-Tataru} I. Lasiecka and D. Tataru, \textit{Uniform boundary
stabilization of semilinear wave equation with nonlinear boundary
damping}, Differential and Integral Equations, 6 (1993), pp.
507--533.

\bibitem{KDV-weak} C.P. Massarolo, G. Perla Menzala and A.F. Pazoto,  \textit{On the uniform
decay for the Korteweg-de Vries equation with weak damping}. Math.
Methods Appl. Sci. 30 (2007), no. 12, pp. 1419--1435.

\bibitem{Pazoto} A.F. Pazoto, \textit{Unique continuation and decay for the
Korteweg-de Vries equation with localized damping}. ESAIM Control
Optim. Calc. Var. 11 (2005), no. 3, pp. 473--486.

\bibitem{Perla} G.P. Menzala, C.F. Vasconcellos and  E. Zuazua, \textit{Stabilization of the Korteweg-de Vries equation
with localized damping}. (English) J. Q. Appl. Math. 60, No. 1,
(2002) pp. 111--129 .


\bibitem{pazy} A. Pazy, \textit{Semigroups of linear operators and applications to partial
differential equations}, Springer, 3rd edition, (1983).

\bibitem{rosierzhang} L. Rosier and B.Y. Zhang, \textit{Global stabilization of the generalized
korteweg–de vries equation posed on a finite domain}, SIAM J.
Control Optim. 3 (2006) pp. 927--956.

\bibitem{vitillaro} E. Vitillaro, \textit{Some new results on global nonexistence and blow-up for evolution problems
with positive initial energy}, Rend. Inst. Mat. Univ. Trieste, 31
(2000), pp. 375--395.

\bibitem{zhang} B.Y. Zhang, \textit{Unique continuation for the Korteweg-de Vries
Equation}, SIAM J. Math. Anal. 23 (1992), pp. 55--71.

\bibitem{W1} M.I. Weinstein, \textit{Liapunov stability of ground
states of nonlinear dispersive evolution equations}. Comm. Pure
Appl. Math.,  v. 39, (1986), pp. 51--68.



\end{thebibliography}
\end{document}